\newfont{\bssten}{cmssbx10}
\newfont{\bssnine}{cmssbx10 scaled 900}
\newfont{\bssdoz}{cmssbx10 scaled 1200}
\newtheorem{theorem}{Theorem}
\newtheorem{lemma}{Lemma}
\newtheorem{proposition}{Proposition}
\newtheorem{corollary}{Corollary}
\def\/{\, | \,}
\def\ind{{\mathchoice {\rm 1\mskip-4mu l} {\rm 1\mskip-4mu l}
{\rm 1\mskip-4.5mu l} {\rm 1\mskip-5mu l}}}
\def\N{{\mathbb N}}
\def\Z{{\mathbb Z}}
\def\P{{\mathbb P}}
\newcommand{\R}{{\mathbb R}}
\def\esp#1{{\mathbb E}\left[#1\right]}
\newcommand{\pae}[1]{\mbox{$\lfloor \kern-1pt #1 \kern-1pt \rfloor$}}
\newcommand{\paep}[1]{\mbox{$\lceil \kern-1pt #1 \kern-1pt \rceil$}}
\newcommand\pr[1]{{\mathbb P}\left[#1\right]}
\newcommand\om{{\omega}}
\newcommand\gre{\textbf{e}}
\newcommand\maG{{\mathcal G}}
\newcommand\maH{{\mathcal H}}
\newcommand\maB{{\mathcal B}}
\newcommand\maA{{\mathcal A}}
\newcommand\maC{{\mathcal C}}
\begin{document}
\title{On the stability of a class of non-monotonic systems of parallel queues}
\author{Pascal Moyal}
\maketitle
\begin{abstract}
 We investigate, under general stationary ergodic assumptions, the stability of systems of $S$ parallel queues in which any incoming customer joins the queue of the server having the $p+1$-th shortest workload ($p < S$), or a free server if any.  
This change in the allocation policy makes the analysis much more challenging with respect to the classical FCFS model with $S$ servers, as it leads to the non-monotonicity of the underlying stochastic recursion. We provide sufficient conditions of existence of a stationary workload, which indicate a "splitting" of the system in heavy traffic, into a loss system of $p$ servers plus a FCFS system of $S-p$ servers. To prove this result, we show {\em en route} an original sufficient 
condition for existence and uniqueness of a stationary workload for a multiple-server loss system.  
\end{abstract}
{\bf keywords:} Queueing systems, Parallel queues, Stochastic comparison, Ergodic Theory;\\
{\bf subject class (MSC 2000):} Primary  60K25, Secondary 35B35.

\section{Introduction}
\label{sec:intro}
In this paper, we re-visit the classical question of stability for multiple-server queues, by investigating 
systems of parallel queues in which the service discipline does not necessarily amount to First Come, First Served (FCFS). 

Let us consider a system of $S$ servers ($S \ge 1$), to which are associated $S$ distinct lines, in a way that 
any entering customer is assigned upon arrival to a line for good, and will wait in that line until service, without possibility 
of switching queue. Such a system is commonly referred to as {\em parallel queues}, as opposed to a {\em multiple-server queue} having a single line and $S$ servers. 

For multiple-server queues, the FCFS service discipline is optimal in various senses, see e.g. \cite{Foss81,Foss89,CerFoss01}. 
It is well known, and easily checked, that a multiple-server queue with $S$ servers amounts to a parallel queue applying the following allocation policy: any incoming customer choses upon arrival to join the queue of the server 
having the least {\em workload}, {\em i.e.} the smallest residual work. This allocation policy is usually called {\em Join the Shortest Workload} (JSW) in the context of parallel queues. To set the big picture, 
to any system of $S$ parallel queues with a fixed allocation policy, corresponds 
a multiple-server queue of $S$ servers with a corresponding service discipline, but the converse is not true : in general, 
a multiple-server queue of service discipline different from FCFS is {\em not} a system of parallel queues, as 
the identity of the server to which an incoming customer is intended to upon arrival, may change during the wait.

For the aforementioned optimality reason, most of the literature on the stability of multiple-server queues has addressed 
FCFS systems, or equivalently JSW systems of parallel queues. In fact, the paradigm of parallel queues appeared as more convenient to address the stability of the system in the strongest possible sense. 
Indeed, under the most general assumption (stationary ergodic arrival process and 
service sequence), the queue is simply represented by a stochastic recursive sequence (SRS), keeping track 
upon arrival times, of the residual workload of all servers, in increasing order 
(we call it hereafter the {\em service profile} of the system). The latter SRS follows the famous 
Kiefer and Wolfowitz equation (\ref{eq:KW}), see \cite{KW55}.   
 
Following this approach, Neveu \cite{Neu83} shows a natural 
stability condition (\ref{eq:condstabJSW}) for parallel queues under the JSW policy.   
This stability result is inherited from the monotonicity of the service profile sequence for the 
coordinate-wise ordering, and a minimal stationary profile is given by Loynes's Theorem \cite{Loynes62}. 
Whenever such solution exists, it is in general not unique, and an explicit maximal solution can be obtained, see 
\cite{Brandt85}. 
 
As alternative allocation policies, 
an abundant literature has investigated the so-called {\em Join the Shortest Queue} (JSQ) policy, 
allocating the incoming customers to the server having the shortest line at this instant, 
see \cite{Whitt86,Koole91,KST99,Koole05}. Let us also quote the several existing works (see \cite{SW03,Moy15}), on the {\em semi-cyclic service} allocations (SCS), for which the last $k$ servers (where $k \le S-1$) to which a customer has been assigned are 'frozen', and cannot welcome the next incoming customer. In both cases, the aforementioned papers mostly compare the performance of the alternative model with respect to JSW, however no stability study is undertaken.   

\medskip

In this paper, we make a first step toward a more general stability study for parallel queues, by generalizing 
the stability results obtained for JSW, to systems ruled by allocations policies of the following class~: send any incoming customer to a server having an empty queue, if any, or to a server having the $p+1$-th smallest workload, where the integer $p \le S-1$ is fixed. This allocation policy is called hereafter J$_{p+1}$SW. 

This generalization makes sense for practical purposes: as is easily seen, the stability of systems under other policies, such as JSQ or SCS, can be addressed by comparison to a J$_{p+1}$SW model. But it also raises an interesting theoretical issue: as will be shown below, this change of policy complicates dramatically the analysis with respect to a JSW system, 
as the underlying SRS is no longer monotonic. Therefore a direct use of Loynes' Theorem is not possible in the present context. 
In particular, it can be easily checked that, although the stability condition of JSW systems is still necessary, it is no longer sufficient for J$_{p+1}$SW systems, as a counter example demonstrates in Section \ref{subsec:NonSuff}. 
We resort to more sophisticated tools, namely the theory of Renovating Events of Borovkov and Foss (see \cite{Bor84,Foss92}) and stochastic comparison, to achieve a sufficient stability condition which is strictly stronger that (\ref{eq:condstabJSW}), 
see Theorem \ref{thm:stabGp}. As will be elaborated in Section \ref{subsec:stab}, this stability condition can be interpreted 
in terms of asymptotic splitting of the model into a loss system of $p$ servers plus an overflow system of $S-p$ servers. 
To prove this, we show {\em en route} a stability result for loss systems with multiple-servers, completing the 
result of \cite{Fli89}. 
 
\bigskip
 
This paper is organized as follows. After some preliminary in Section \ref{sec:prelim}, we introduce our non-monotonic model 
of $S$ parallel queues in Section \ref{sec:model}, and our main stability results for J$_{p+1}$SW queues and Loss systems 
in sub-section \ref{subsec:stab} (namely Theorems \ref{thm:stabGp} and \ref{thm:stabLoss}). 
 Then, in Section \ref{sec:aux} we introduce and study several auxiliary monotonic SRS which will be useful for 
proving the main results. The proofs of Theorems \ref{thm:stabGp} and \ref{thm:stabLoss} are then given in Section \ref{sec:proofs}, in which they are in fact presented as corollaries of stronger more abstract results, namely Propositions 
\ref{prop:stabGp} and \ref{prop:stabLoss}.

\section{Preliminary}
\label{sec:prelim}

\subsection{Main notation}
\label{subsec:notation}
In what follows, $\R$ denotes the real line and $\R_+$, the subset of non-negative numbers. Denote $\mathbb N$ (respectively $\N^*$, $\mathbb Z$), the subset of non-negative (resp. positive, relative) integers. For any two elements $p$ and $q$ in $\N$, $\llbracket p,q \rrbracket$ denotes the finite family 
$\{p,p+1,...,q\}$. For any $x,y \in \R$, denote $x\wedge y=\min\left(x,y\right)$ and $x \vee y=\max(x,y)$. Let $x^+ = x \vee 0$. 
Let $q \in \N^*$. We denote for all $u, v \in \R^q$ and $\lambda \in \R$, 
\begin{align*}
u&=\left(u(1),u(2),...,u(q)\right);\\ 
\lambda u&=\left(\lambda u(1),...,\lambda u(q)\right);\\
u+v &=\left(u(1)+v(1),...,u(q)+v(q)\right);\\
\mathbf 0_q&=(0,\,...,\,0);\\
\gre_i &=(0,\,...,\,\underbrace{1}_{i},\, ... 0)\mbox{ for all }i\in\llbracket 1,q \rrbracket;\\
u^+ &=\Bigl(u(1)^+,u(2)^+,...,u(q)^+\Bigl);\\
\bar u,&\,\mbox{ the fully ordered version of }u,\mbox{ i.e. }\bar u(1) \le ... \le \bar u(q).
\end{align*} 
Denote then $\overline {(\R_+)^q}$ the subset of fully ordered vectors, furnished with the euclidean norm. 
We equip $\overline {(\R_+)^q}$ with the coordinate-wise ordering "$\prec$", {\em i.e.} 
$u \prec v$ if and only if $u(i) \le v(i)$ for all $i\in \llbracket 1,q \rrbracket.$ 
Finally,  if $p$ and $q \in \N^*$ are such that $p \le q$, for any element $u \in \left(\R_+\right)^q$ we denote 
$u[p]$ the restriction of $u$ to its first $p$ coordinates, i.e. 
\[u[p]=(u(1),...,u(p)).\]

\subsection{Stochastic Recursive Sequences}
\label{subsec:SRS}
The stability results presented below will be stated in ergodic theoretical terms, by investigating the existence of a stationary 
version of a stochastic recursion representing the system under consideration. Let us recall the basics of this framework. 
For more details, the reader is referred to the two classical monographs  
\cite{BranFranLis90} and \cite{BacBre02}. 

\medskip

We say that $\left(\Omega,\mathscr F,\mathbb P,\theta\right)$ is a stationary ergodic quadruple if 
$\left(\Omega,\mathscr F,\mathbb P\right)$ is a probability space and $\theta$ is a bijective operator 
on $\mathscr F$ such that for any $\maA \in \mathscr F$, $\pr{\maA}=\pr{\theta^{-1}\maA}$ and any $\theta$-invariant event 
$\maB$ ({\em i.e.}, such that $\maB=\theta^{-1}\maB$) is either negligible or almost sure. Note that this is then also 
true for any $\theta$-contracting event $\maB$ ({\em i.e.} such that $\pr{\maB \,\Delta \,\theta^{-1}\maB}=0$, where $\Delta$ denotes the symmetrical difference). We denote for any $n\in\N^*$, 
\[\theta^n=\underbrace{\theta\circ\theta\circ...\circ\theta}_{n\mbox{ times }}\,\,\mbox{ and }\,\,\theta^{-n}=\underbrace{\theta^{-1}\circ\theta^{-1}\circ...\circ\theta^{-1}}_{n\mbox{ times. }}\]
Let $E$ be a partially ordered Polish space having a minimal point $\mathbf 0$. 
Let $X$ be a $E$-valued random variable (r.v. for short) and $\Psi$ a random measurable mapping from $E$ onto itself. The stochastic recursive sequence (SRS) $\left\{W_{X,n}\right\}$ of initial value $X$ 
and driven by $\Psi$ is defined by
\[
\left\{\begin{array}{ll}
W_{X,0} &=X;\\
W_{X,n+1} &=\Psi\circ\theta^n\left(W_{X,n}\right),\,n\in\N.
\end{array}\right.\]
It is then routine to check that a time-stationary sequence having, on a reference probability space, the same distribution as $\{X_n\}$ corresponds to a solution $X$ defined on $\Omega$ to the functional equation  
\begin{equation}
\label{eq:recurstat}
X\circ\theta=\Psi\left(X\right),\mbox{ almost surely.} 
\end{equation}
As stated by Loynes's Theorem (\cite{Loynes62}, see also the generalization in section 2.5.2 of \cite{BacBre02}), in the case where $\Psi$ is a.s. nondecreasing and continuous, a solution to (\ref{eq:recurstat}), almost surely (a.s., for short) minimal for the 
considered ordering on $E$, is given by the almost sure limit of the so-called 
 Loynes's sequence $\{\mathbf W_n\}$, defined by  
\[\left\{\begin{array}{ll}
\mathbf W_0&=0;\\
\mathbf W_{n+1}&=\Psi\circ\theta^{-1}\left(\mathbf W_n\circ\theta^{-1}\right),\,n\in\N. 
\end{array}\right.\]  
(hereafter Loynes's sequences are always denoted in bold letters). 

\section{Join one of $S$ parallel queues}
\label{sec:model} 
We consider a queueing system having $S$ servers ($S \in \N^*$) working in parallel, each of them providing service in First in, first out (FIFO). We assume that each server has a dedicated line and that switching between lines is not allowed, in other 
words any incoming customers choses a server upon arrival, and remain in the corresponding line until service. 

\medskip

The input of the system is represented by a marked stationary point process. We 
denote $\mathscr Q=\left(\Omega,\mathcal F, \mathbb P, \theta\right)$, the Palm probability space associated to the arrival process. In particular, $\mathbb P$-a.s., a customer (denoted $C_0$) enters the system at time 0, requesting a service time of duration $\sigma$, measured in an arbitrary time unit. The following customer $C_1$ enters the system at time $\tau$. Then, for any $n \in \Z$, $\sigma\circ\theta^n$ is interpreted as the service time requested by customer $C_n$, and $\tau\circ\theta^n$ represents the time epoch between the arrivals of customers $C_n$ and $C_{n+1}$ (again, see 
\cite{BranFranLis90,BacBre02}). In addition, the shift $\theta$ is assumed $\mathbb P$-stationary and ergodic (so $\mathscr Q$ is a stationary ergodic quadruple and we can work in the settings of Section \ref{subsec:SRS}). We also assume that $\sigma$ and $\tau$ are integrable, and that $\pr{\tau>0}=1$.  

\subsection{The service profile} 
We assume that a global information is available to all entering customers, on the quantity of work to be completed by each of the $S$ servers (termed {\em workload} of the corresponding server). In other words, for any $n$ the system can be represented at the arrival of $C_n$ by the random vector $W_n \in \overline{(\R_+)^S}$, where $W_n(i)$ denotes the workload of server $i$, whenever the servers are sorted in the ascending order of workloads. The random vector $W_n$ is called the {\em service profile} at time $n$.  
The {\em allocation policy} is the rule for attributing a server to the incoming customers. The main allocation policy addressed in the literature in that context is {\em Join the shortest Workload} (JSW), in other words any incoming customer joins the queue of the server having the smallest workload. 
It is a weel-known fact since the work of Kiefer and Wolfowitz \cite{KW55}, that the service profile sequence $\{W_n\}$ 
is a SRS driven by the random map 
\[G:\left\{\begin{array}{ll}
 \overline {(\R_+)^S} &\to \overline {(\R_+)^S}\\
    u &\mapsto \overline{\left[u + \sigma\mathbf e_1 -\tau\mathbf 1\right]^+}.
    \end{array}\right.\]
It is immediate to observe that $G$ is a.s. $\prec$-nondecreasing and continuous, so Loynes's Theorem shows the existence 
of a solution $W$ to 
the equation \begin{equation}\label{eq:KW} W\circ\theta=G\left(W\right)\mbox{ a.s.,}\end{equation} 
moreover $W$ is a.s. finite whenever 
\begin{equation}
\label{eq:condstabJSW}
\esp{\sigma} < S\esp{\tau},
\end{equation}
and is such that $\pr{W(1)=0}>0$ (see also \cite{Neu83}). 

\subsection{Alternative non-monotonic allocations}

Consider now the following allocation policies for any $p \in \llbracket 0,S-1 \rrbracket$: 
any incoming customer is sent: 
\begin{itemize}
\item[(i)] to the queue having the $p+1$-th smallest workload among the $S$ servers if all the servers are busy, or
\item[(ii)] to a free server, if any. 
\end{itemize} 
Once in a given line, it is not possible for any customer to leave the queue for another one. 
Such policy will be denote J$_{p+1}$SW for 'Join the $p+1$-th shortest workload'. Observe that J$_1$SW coincides with JSW. 


Fix $p \in \llbracket 0,S-1$, and represent the system under J$_{p+1}$SW, upon the arrival of a customer $C_n$, by the $\overline {(\R_+)^S}$-valued service profile $W^p_n$, defined as above. In other words, $W^p_n(i)$ denotes 
the workload of the server having the $i$-th smallest workload, upon the arrival of $C_n$. 
We can immediately observe the following generalization of Kiefer and Wolfowitz equation: 
 the service profile sequence $\{W^p_n\}$ is an $\overline{\left(\R_+\right)^S}$-valued SRS 
driven by the random map 
\[G^p:\left\{\begin{array}{ll}
 \overline {(\R_+)^S} &\to \overline {(\R_+)^S}\\
    u &\mapsto G\left(u\right)\ind_{\{u(1)=0\}}+\overline{\left[u + \sigma\mathbf e_{p+1} -\tau\mathbf 1\right]^+}\ind_{\{u(1)>0\}}.
    \end{array}\right.\]
Then, from (\ref{eq:recurstat}) a time-stationary service profile is a $\overline {(\R_+)^S}$-valued r.v. $W^p$, solution to the functional equation 
\begin{equation}
\label{eq:recurstatGp}
W^p\circ\theta=G^p\left(W^p\right),\mbox{ a.s..} 
\end{equation} 

The aim of the present paper is to solve (\ref{eq:recurstatGp}) for any $p < S$ and thereby, to generalize the aforementioned 
stability result to any value of $p$. However, we can observe right away that, 
for any $p \in \llbracket 1,S-1 \rrbracket$ the random map $G^p$ is not 
$\prec$-nondecreasing. Consequently, the resolution of (\ref{eq:recurstatGp}) cannot be addressed by a Loynes-type argument such as 
(\ref{eq:KW}). We will circumvent this difficulty by comparing stochastically the service profile sequence to a monotonic 
SRS, namely the one driven by the mapping $\Phi^p$ defined in Section \ref{sec:aux}.  

\bigskip
By investigating the stability of J$_{p+1}$SW systems, we will be led to address the special (and well-known in stability theory) 
case of a pure loss system. As is easily seen, extrapolating to the case $S=p$ in the definition of a J$_{p+1}$SW model, 
leads to the classical loss system of $p$ servers. 
 It is formally defined as follows: there are no lines, so incoming customers enter service if and only if at least one of the servers is free - and begin service with one of these free servers right away. If no server is available, then the incoming customer is immediately lost. 
For each server, the workload is thus given by the remaining service time of the customer in service if any, 
or 0 else. The system is then represented by the service profile sequence $\{U_n^p\}$, where for all $n$, 
$U_n^p(i)$ denotes the $i$-th workload of a server, ranked in ascending order.  
On $\mathscr Q$, it is then easy to check that the service profile sequence $\{U_n^p\}$ is an SRS having the following driving map: 
\[H^p:\left\{\begin{array}{lll}
\overline{\left(\R_+\right)^p} &\longrightarrow &\overline{\left(\R_+\right)^p}\\
                  u &\longmapsto & \overline{\left[u + \sigma\ind_{\{u(1)=0\}}\gre_1-\tau\mathbf 1\right]^+}.
                  \end{array}\right.\]
In other words, for all $u \in \overline{\left(\R_+\right)^p}$ and all $i\in \llbracket 1,p\rrbracket$ we have a.s. 
\begin{equation}
\label{eq:coordSRSLoss}
\begin{array}{ll}
H^p(u)(i) = \left[\left(u(i) \vee \left(\sigma\ind_{\{u(1) =0\}}\right)\right)\wedge u(i+1)-\tau\right]^+;\\
H^p(u)(p) = \left[u(p) \vee \left(\sigma\ind_{\{u(1) =0\}}\right)-\tau\right]^+.\\
\end{array}
\end{equation} 
The question of existence of a stationary service profile then amounts to a solution to the equation 
\begin{equation}
\label{eq:recurstatLoss}
U\circ\theta=H^p(U),\,\P-\mbox{ a.s..}
\end{equation} 

\subsection{Non-sufficiency} 
\label{subsec:NonSuff}
Clearly, condition (\ref{eq:condstabJSW}) is necessary for the stability of any system of $S$ parallel queues.  To see this, on can generalize the argument of 
Exercise 2.3.1  in \cite{BacBre02} : whenever 
$\esp{\sigma} > S\esp{\tau}$, if there existed a stationary service profile $W$ for the system under consideration we would have that  
\begin{align*}
\left(\sum_{i=1}^S W(i)\right)\circ\theta - \sum_{i=1}^S W(i)
&=\left[W(1)+\sigma-\tau\right]^+ +\sum_{i=2}^S \left[W(2)-\tau\right]^+ - \sum_{i=1}^S W(i)\\
& \ge W(1)+\sigma-\tau-W(1)+\sum_{i=2}^S \left(W(i)-\tau - W(i)\right)= \sigma - S\tau
\end{align*}
and therefore 
$$\esp{\left(\sum_{i=1}^S W(i)\right)\circ\theta - \sum_{i=1}^S W(i)} \ge \esp{\sigma} - S\esp{\tau} >0,$$
a contradiction to the Ergodic Lemma (Lemma 2.2.1 in \cite{BacBre02}). 

\medskip

However, as is intuitively clear, the natural condition (\ref{eq:condstabJSW}) is not sufficient for the stability of a 
system under J$_{p+1}$SW. We can convince ourselves using a counter-example. 
Construct the following simple stationary ergodic quadruple $\mathscr Q$: we let $\Omega=\{\om_1,\om_2,\om_3\}$, $\mathbb P$ be the uniform probability on $\Omega$ and $\theta$ be the automorphism 
$$\theta:\om_1 \mapsto \om_2 \mapsto \om_3 \mapsto \om_1.$$ 
Define on $\Omega$ the following random variables, 
\[\left\{\begin{array}{c}
\sigma(\om_1)=2.25, \sigma(\om_2)=1.5,\, \sigma(\om_3)=2;\\
\tau(\om_1)=\tau(\om_2)=\tau(\om_3)=1,
\end{array}\right.\]
in a way that $$\esp{\sigma}={5.75 \over 3}< 2\esp{\tau}.$$
\begin{proposition}
For $S=2$, $p=1$ and the r.v. $\sigma,\tau$ above, there is no solution to (\ref{eq:recurstatGp}) on the quadruple $\mathscr Q$. 
\end{proposition}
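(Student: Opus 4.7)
The plan is to reduce the existence question to a finite fixed-point problem on $\overline{(\R_+)^2}$ and then argue by case enumeration. Since $\Omega = \{\om_1, \om_2, \om_3\}$ is a $\theta$-cycle, the functional equation (\ref{eq:recurstatGp}) specializes to the three relations $W^1(\om_{i+1}) = G^1_{\om_i}(W^1(\om_i))$ for $i \in \{1,2,3\}$ with $\om_4 := \om_1$, where $G^1_{\om}$ denotes the deterministic map obtained from $G^1$ by setting $\sigma = \sigma(\om)$ and $\tau = 1$. Existence of a stationary solution is therefore equivalent to existence of a fixed point in $\overline{(\R_+)^2}$ of the composition $F := G^1_{\om_3} \circ G^1_{\om_2} \circ G^1_{\om_1}$.

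To analyse $F$, I unfold the two branches of $G^1_{\om_i}$ on a point $u = (a,b)$ with $0 \le a \le b$: if $a = 0$, $G^1_{\om_i}(u) = \overline{(\sigma(\om_i) - 1,\,(b-1)^+)}$ (the JSW branch, whose final ordering depends on whether $b \le \sigma(\om_i)$); if $a > 0$, $G^1_{\om_i}(u) = ((a-1)^+,\, b + \sigma(\om_i) - 1)$, which is automatically ordered since $\sigma(\om_i) \ge 1$ for every $i$. The composition $F$ therefore splits into $2^3 = 8$ regimes, indexed by whether the first coordinate at each of the three iterates is zero or positive; in each regime, $F$ is piecewise affine and its fixed-point equation on $(a,b)$ is a small linear system.

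The next step is to verify that each of these $8$ regimes yields either no solution or a candidate inconsistent with its defining inequalities. The most transparent case is the always-busy regime, in which the three steps subtract $3$ in total from the smaller coordinate and add $\sum_{i=1}^3 (\sigma(\om_i)-1) = 2.75$ to the larger, giving $F(a,b) = ((a-3)^+,\, b+2.75)$; the equation $b = b + 2.75$ rules it out immediately. The main obstacle is combinatorial rather than conceptual: whenever a JSW branch is active, the output ordering depends on whether $(b-1)^+$ is smaller or larger than $\sigma(\om_i) - 1$, creating nested sub-cases. I would handle this systematically by partitioning the admissible range of $b$ along the thresholds $\{1,\,1.5,\,2,\,2.25\}$ and solving the resulting affine system on each piece; in every sub-case the equation collapses either to some $b = b + c$ with $c > 0$, or to an equality $a = c > 0$ contradicting the branch assumption $a = 0$ (or conversely), thereby excluding every potential fixed point and hence any stationary solution.
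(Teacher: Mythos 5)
Your approach---reduce (\ref{eq:recurstatGp}) on the three-point cycle to the fixed-point problem for the composition $F = G^1_{\om_3}\circ G^1_{\om_2}\circ G^1_{\om_1}$, then rule out any fixed point by exhaustive case analysis---is essentially the same as the paper's. The paper organizes the cases by the value of the first coordinate $V^2(1)[\om_1]$ in the ranges $\{0\}$, $(0,1]$, $(1,2]$, $(2,\infty)$, which is equivalent to fixing the sequence of active branches as you propose, and then sub-splits on $V^2(2)[\om_1]$ across the same sorting thresholds you identify. Your verification of the always-busy regime is correct.

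The one reservation is that the proposal, as written, is a plan rather than a proof: it works out only one of the eight regimes and then asserts, without checking, that every remaining sub-case collapses to a contradiction of one of two specific forms. The paper carries this enumeration out in full (cases 1a--1d, 2a--2b, 3, 4), and the contradictions obtained there are not all of the two forms you name---e.g.\ in case 1a one derives a specific value for $V^2(2)[\om_1]$ exceeding the defining bound $V^2(2)[\om_1]\le 1$, which is a range inconsistency rather than ``$b = b + c$'' or ``$a = c > 0$ against $a = 0$''. To turn this into a complete proof you would need to carry out the remaining regimes and their $b$-subcases explicitly; the approach itself is sound and there is no conceptual gap.
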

\begin{proof}
Suppose that the r.v. $V^2=\left(V^2(1),V^2(2)\right)$ solves (\ref{eq:recurstatGp}) on $\mathscr Q$. 
Denote in this proof by $X[\om]$, the value of a r.v. $X$ at sample $\om$. Then, 
\begin{enumerate}
\item If $V^2(1)[\om_1]=0$,
then \begin{itemize}
     \item[(1a)] If $V^2(2)[\om_1]\le 1$, then $V^2[\om_2]=(0,1.25),$ which implies that $V^2[\om_3]=(0.25,0.5)$ and in turn 
                   $V^2[\om_1]=(0,1.25)$, an absurdity. 
     \item[(1b)] If $V^2(2)[\om_1]\in (1,2]$, we have $V^2[\om_2]=\left(V^2(2)[\om_1]-1,1.25\right),$ then 
           $V^2[\om_3]=(0,1.75)$ and $V^2[\om_1]=(0.75,1)$, which is also absurd.
     \item[(1c)] $V^2(2)[\om_1]\in (2,2.25]$ entails that $V^2[\om_2]=\left(V^2(2)[\om_1]-1,1.25\right),$  
           $V^2[\om_3]=\left(V^2(2)[\om_1]-2,1.75\right)$ and then $V^2[\om_1]=(0,2.75)$, another contradiction.
     \item[(1d)] If $V^2(2)[\om_1] > 2.25$, then $V^2[\om_2]=\left(1.25,V^2(2)[\om_1]-1\right),$  
           $V^2[\om_3]=\left(0.25,V^2(2)[\om_1]-0.5\right)$ and then $V^2[\om_1]=(0,V^2(2)[\om_1]+0.5)$, again an absurdity.   
     \end{itemize}      
\item Assume now that $V^2(1)[\om_1]\in (0,1]$. Then, 
     \begin{itemize}
     \item[(2a)] If $V^2(2)[\om_1]\le 0.25$, then we also have that $V^2(1)[\om_1]\le 0.25$ and therefore 
      $V^2[\om_2]=(0,V^2(2)[\om_1]+1.25),$ and in turn $V^2[\om_3]=(V^2(2)[\om_1]+0.25,0.5)$ and  
                   $V^2[\om_1]=(V^2(2)[\om_1]-0.75,1.5)$, a contradiction. 
     \item[(2b)] If now $V^2(2)[\om_1] >  0.25$, then  
      $V^2[\om_2]=(0,V^2(2)[\om_1]+1.25),$ and then $V^2[\om_3]=(0.5,V^2(2)[\om_1]+0.25)$ and  
                   $V^2[\om_1]=(0,V^2(2)[\om_1]+1.25)$, another absurdity. 
     \end{itemize}
\item Suppose now that $V^2(1)[\om_1]\in (1,2]$, and in particular $V^2(2)[\om_1]>1$. 
This implies that $V^2[\om_2]=(V^2(1)[\om_1]-1,V^2(2)[\om_1]+1.25),$ and in turn $V^2[\om_3]=(0,V^2(1)[\om_1]+1.75)$ 
and $V^2[\om_1]=(1,V^2(2)[\om_1]+0.75)$, again absurd.        
\item Finally, assuming that $V^2(1)[\om_1]>1$ entails that  
$$V^2[\om_2]=(V^2(1)[\om_1]-1,V^2(2)[\om_1]+1.25),\,\quad V^2[\om_3]=(V^2(1)[\om_1]-2,V^2(2)[\om_1]+1.75)$$ 
and then $V^2[\om_1]=\left(\left[V^2(1)[\om_1]-3\right]^+,V^2(2)[\om_1]+0.75\right)$, another contradiction. This concludes the proof.
\end{enumerate}  
\end{proof}

\subsection{Main results}
\label{subsec:stab}
Define the following family of random variables,   
\begin{equation}
\label{eq:defZ}
Z_\ell=\left[\sup_{k\ge \ell}\left(\sigma\circ\theta^{-k} -\sum_{i=1}^k \tau\circ\theta^{-i}\right) \right]^+,\,
\ell \in \N^*
\end{equation}
which, from Birkhoff's Theorem, are $\mathbb P$-a.s. finite under the ongoing assumptions 
(see Exercise 2.6.1 in \cite{BacBre02}).  

Fix $p \in \llbracket 1,S \rrbracket$ throughout this section. 
Our main result provides a sufficient condition for the existence of a solution to 
(\ref{eq:recurstatGp}),  
\begin{theorem}
\label{thm:stabGp}
Assume that the input is GI/GI (i.e. the inter-arrival times and service times are mutually independent 
i.i.d. sequences) and that the distribution of $\tau$ has unbounded support. 
If moreover the following conditions hold, 
\begin{align}
&\esp{\sigma}\pr{Z_p>0} <(S-p)\esp{\tau};\label{eq:hypoGp1}\\ 
&\pr{Z_p=0} >0,\label{eq:hypoGp2}
\end{align}
where $Z_p$ is defined by (\ref{eq:defZ}), then the equation (\ref{eq:recurstatGp}) admits a proper solution $W^p$.
\end{theorem}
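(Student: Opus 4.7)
My plan is to circumvent the lack of monotonicity of $G^p$ by combining three ingredients: stochastic domination by the monotonic SRS driven by the auxiliary map $\Phi^p$ built in Section \ref{sec:aux}; an application of Loynes's Theorem to that dominant, which produces a finite stationary upper bound precisely under conditions \eqref{eq:hypoGp1}--\eqref{eq:hypoGp2}; and the Borovkov--Foss theory of renovating events, to lift the resulting tightness of $\{W^p_n\}$ into a proper solution of \eqref{eq:recurstatGp}.

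I would first couple the service profile $\{W^p_n\}$ with the SRS $\{V^p_n\}$ driven by $\Phi^p$, both started from $\mathbf 0_S$, and check by induction the coordinate-wise domination $W^p_n \prec V^p_n$ for every $n$. The point of the design of $\Phi^p$ is that it encodes the splitting anticipated in the introduction: it governs its first $p$ coordinates by the loss recursion $H^p$, and routes the corresponding overflow into the last $S-p$ coordinates through a JSW-like step. Accordingly, Theorem \ref{thm:stabLoss} applied to the loss block produces, under \eqref{eq:hypoGp2}, a stationary workload whose saturation probability equals $\pr{Z_p>0}$; the overflow then constitutes a thinned arrival stream for the FCFS part of $\Phi^p$, and its stability condition is \eqref{eq:condstabJSW} with $S$ replaced by $S-p$ and $\esp{\sigma}$ by $\esp{\sigma}\pr{Z_p>0}$, that is, exactly \eqref{eq:hypoGp1}. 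Loynes's theorem applied to the $\prec$-nondecreasing, continuous map $\Phi^p$ then supplies a finite $V^p$ with $V^p\circ\theta = \Phi^p(V^p)$, yielding the desired stochastic upper bound on each $W^p_n$.

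Equipped with this bound, I would implement the renovating events argument. The target is an event $A$ with $\pr{A}>0$ such that, after a deterministic number of steps and on $A$, the state of $\{W^p_n\}$ becomes a measurable function of a fixed window of $(\sigma,\tau)\circ\theta^k$, independent of the initial condition. Here the GI/GI assumption and the unbounded support of $\tau$ are crucial: fixing $M$ with $\pr{V^p(S)\le M}>0$, one picks $N$ so that $N$ consecutive inter-arrivals exceeding $M$ drain every configuration below $M$ down to $\mathbf 0_S$, both under $G^p$ and under $\Phi^p$, which coincide on the idle state. The joint event has positive probability by independence of $\tau$ from $\sigma$ and from $V^p$ and by the unbounded support of $\tau$; by $\theta$-stationarity and ergodicity its negative translates occur infinitely often, and the Borovkov--Foss scheme then delivers an almost sure coupling of $\{W^p_n\}$ with a stationary sequence, from which we read off the desired proper solution $W^p$ of \eqref{eq:recurstatGp}.

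The main obstacle will be the construction of $\Phi^p$ together with the verification of $W^p_n \prec V^p_n$ in the face of the non-monotonicity of $G^p$: when a J$_{p+1}$SW arrival joins the $(p+1)$-th queue, the reordering of the workloads need not respect the coordinate-wise order inherited from $V^p_n$, so the comparison has to be propagated step by step through a careful case analysis on whether the first server is idle or busy. A secondary but delicate point is to arrange the renovating event so that drainage of the \emph{dominant} chain genuinely propagates to the original non-monotonic chain; this rests once again on the observation that the two recursions agree as soon as every server is idle, so that the renovating state $\mathbf 0_S$ is shared by both dynamics.
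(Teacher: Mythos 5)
Your high-level strategy is indeed the paper's: dominate the non-monotonic $\{W^p_n\}$ by the monotonic SRS driven by $\Phi^p$, obtain a finite stationary majorant by Loynes's scheme, and close the argument with Borovkov--Foss renovating events. However, there are two genuine gaps and one structural misreading that need to be flagged.

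First, the finiteness of the Loynes limit $V^p$ is the hard part of the argument, and you do not actually prove it. Loynes's Theorem by itself only delivers a possibly-infinite minimal fixed point; to show it is proper one must rule out the event $\{V^p(j)=\infty,\, j>p\}$. The ``thinned arrival stream'' heuristic -- that the overflow of the loss block feeds the $S-p$ server FCFS block at rate $\esp{\sigma}\pr{Z_p>0}$ -- is exactly what motivates condition \eqref{eq:hypoGp1}, but it is not a proof: overflow epochs are random and state-dependent, so one cannot simply invoke the single-class Loynes bound on the tail block. The paper's Proposition \ref{pro:stabH} does this rigorously through a drift argument on $S_n=\sum_i\mathbf V^p_n(i)$, combined with the observation that if the last $S-p$ coordinates of $V^p$ blow up then $V^p[p]$ necessarily solves the $\Psi^p$ recursion, which lets one compare to Corollary \ref{cor:stabPsi}. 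That chain of implications (and the intermediate, implicitly stated condition \eqref{eq:hypoPhi}, shown to follow from \eqref{eq:hypoGp1} using GI/GI) is the substance you are missing.

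Second, you mischaracterize the loss block of $\Phi^p$. Its first $p$ coordinates are \emph{not} governed by $H^p$ -- $H^p$ carries the non-monotonic factor $\sigma\ind_{\{u(1)=0\}}$, and if $\Phi^p$ used it, $\Phi^p$ would fail to be $\prec$-nondecreasing. Instead, $\Phi^p$ uses the monotone majorant $\Psi^p$ (with an extra $\wedge\,u(p+1)$ truncation at coordinate $p$). Consequently, invoking Theorem \ref{thm:stabLoss} on the ``loss block'' of $\Phi^p$ is not correct; the $\pr{Z_p>0}$ bound you need comes from the comparison $Y^p\prec Z^p$ in Corollary \ref{cor:stabPsi} and Proposition \ref{prop:minsol}, not from the loss-system theorem. (Theorem \ref{thm:stabLoss} is proved by the paper as a separate corollary of the same machinery, not as an ingredient for Theorem \ref{thm:stabGp}.)

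Third, the renovating event you sketch is off in a fixable but real way: $N$ consecutive inter-arrivals exceeding $M$ starting from a configuration $\le M$ do \emph{not} drain the system to $\mathbf 0_S$, because each of those $N$ steps injects a fresh service requirement $\sigma\circ\theta^{n+i}$ which is unbounded and independent of $\tau$. What is true -- and what suffices -- is that after those steps the state is a deterministic function of the fixed window of $(\sigma,\tau)$'s, with no dependence on the initial condition; that is the defining property of a renovating event, not the state being idle. The paper's event $\maG$ in \eqref{eq:hypoGp1bis} is built directly around the weaker requirement $\{V^p(1)=0\}\cap\bigcap_{\ell}\{V^p(\ell)\le\sum_{i<\ell}\tau\circ\theta^i\}$, which has positive probability from GI/GI plus unbounded support of $\tau$, and drives $W^p_{X,n+S-1}$ to a window-determined state after $S-1$ steps without needing all servers idle.
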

Theorem \ref{thm:stabGp} is proven in section \ref{subsec:proofGp}, where we in fact show a stronger result, Proposition 
\ref{prop:stabGp}, which exhibits a weakest, although implicitly defined, sufficient condition of stability for a largest class of 
models. 

\bigskip

Let us now turn to the special case of a loss queue. The stability study of this genuine example of non-monotonic 
system, for which it is well known that the uniqueness and even the existence of a solution are not true in general, has a long history in the literature: see {\em e.g.} the early papers \cite{Fli83,Lis82,Neu83} on single server queues, and the  
generalization to a multiple-server system \cite{Fli89}, where an extension of the original probability space is constructed, 
on which a solution to (\ref{eq:recurstatLoss}) always exists. Hereafter, we complete the result in \cite{Fli89} by providing a sufficient condition of existence and uniqueness on the original space, using a generalization of the coupling method of Section 2.6.2 in \cite{BacBre02} to multiple-server queues. We obtain the following,   

\begin{theorem}
\label{thm:stabLoss}
If the input is GI/GI, the distribution of $\tau$ has unbounded support and condition (\ref{eq:hypoGp2}) holds true,  
then the equation (\ref{eq:recurstatLoss}) admits a unique finite solution $U^p$, that is such that  
\begin{equation}
\label{eq:compareUZ}
U^p \prec \left(Z_p,Z_{p-1},...,Z_1\right)\,\mbox{ a.s..}
\end{equation}
\end{theorem}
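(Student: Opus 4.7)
The plan is to first establish the deterministic domination \refeq{eq:compareUZ}, which traps every candidate solution inside a noise-measurable region, and then to exploit this bound together with the GI/GI structure to construct the stationary solution by coupling from the past via renovating events à la Borovkov--Foss. The main difficulty is that $H^p$ is not $\prec$-nondecreasing, so no direct Loynes argument is available, and the renovation mechanism must operate at the trajectory level rather than the map level.

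For the dominance, let $U$ be any solution of \refeq{eq:recurstatLoss}. At time $0$ the $q \le p$ busy servers correspond to customers $C_{-k_i}$ with distinct arrival indices $1 \le k_1 < \dots < k_q$, each contributing a positive residual workload $w_i = \sigma\circ\theta^{-k_i} - \sum_{m=1}^{k_i}\tau\circ\theta^{-m}$, while the remaining $p - q$ idle servers contribute zeros. Sorted to form $U$, the first $p-q$ coordinates vanish and, for $j > p-q$, the coordinate $U(j)$ is the $(p-j+1)$-th largest among the $w_i$. Since any set of $p-j+1$ distinct positive integers has maximum at least $p-j+1$, at least one of the dominant $w_i$ is indexed by some $k^* \ge p-j+1$, whence $U(j) \le w_{k^*} \le Z_{p-j+1}$. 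This proves \refeq{eq:compareUZ} for every solution.

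For existence and uniqueness, define the empty-start iterates $U^{[N]}_{-N} = \mathbf 0$ and $U^{[N]}_{n+1} = H^p\circ\theta^n(U^{[N]}_n)$ for $N \ge 1$. The same pigeonhole applied to this trajectory (each in-service customer at time $n$ has a distinct past arrival index $k \ge 1$) gives $U^{[N]}_n \prec (Z_p,\ldots,Z_1)\circ\theta^n$ throughout $n \in \llbracket -N,0\rrbracket$. Introduce the noise-measurable event
\[R_n := \{\tau\circ\theta^n > Z_1\circ\theta^n \vee \sigma\circ\theta^n\}.\]
Inspecting $H^p$ in the two cases $U^{[N]}_n(1) = 0$ and $U^{[N]}_n(1) > 0$ shows that the maximum workload at time $n+1$ is bounded by $[\max(U^{[N]}_n(p),\sigma\circ\theta^n) - \tau\circ\theta^n]^+$, and this vanishes on $R_n$ by the dominance of $U^{[N]}_n(p)$ by $Z_1\circ\theta^n$. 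Hence $U^{[N]}_{n+1} = \mathbf 0$ on $R_n$, regardless of $U^{[N]}_n$. Under GI/GI, $\tau\circ\theta^n$ is independent of $(\sigma\circ\theta^n, Z_1\circ\theta^n)$, so the unbounded support of $\tau$ yields $\pr{R_n} > 0$, and ergodicity forces $\{n < 0 : R_n\}$ to be unbounded from below almost surely. Setting $\nu := \max\{n < 0 : R_n \text{ holds}\}$, which is a.s.\ finite, one gets $U^{[N]}_{\nu+1} = \mathbf 0$ for every $N \ge -\nu$; the deterministic evolution from $\nu + 1$ to $0$ then yields a common value $U^p$ for all $U^{[N]}_0$, and passing the recursion to the limit in $N$ shows that $U^p$ solves \refeq{eq:recurstatLoss}. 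Conversely, if $\tilde U$ is any other solution then the dominance $\tilde U \prec (Z_p,\ldots,Z_1)$ established above again makes the renovation at time $\nu$ operative, forcing $\tilde U\circ\theta^{\nu+1} = \mathbf 0$, and the deterministic forward iteration on $\llbracket \nu+1,0\rrbracket$ yields $\tilde U = U^p$. Finiteness of $U^p$ and the bound \refeq{eq:compareUZ} are then immediate from Step~1.

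The main obstacle is exactly the loss of monotonicity of $H^p$, which rules out a direct Loynes scheme and means the renovating event must act pathwise. The key replacement is the interplay between the pigeonhole bound and the event $R_n$: the former shows that both the empty-start trajectory and any hypothetical stationary solution are trapped inside the noise-measurable vector $(Z_p,\ldots,Z_1)\circ\theta^n$, and the latter uses a single unusually long inter-arrival to drain all $p$ servers simultaneously to $\mathbf 0$. Checking carefully that this emptying mechanism is uniform in the starting horizon $N$, and that the resulting limit is a bona fide measurable stationary solution on the original space $(\Omega,\mathscr F,\mathbb P,\theta)$, is the delicate technical part.
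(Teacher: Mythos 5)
Your construction is a genuinely different route from the paper's. The paper derives Theorem~\ref{thm:stabLoss} from Proposition~\ref{prop:stabLoss}: it first produces the minimal solution $Y^p$ of the monotone auxiliary recursion driven by $\Psi^p$ (Corollary~\ref{cor:stabPsi}), then compares the loss dynamics with $\Psi^p$ via Lemma~\ref{lemma:compareLoss}, invokes renovating events of length $p-1$ built from the event $\maH$, and finally proves $U\prec Y^p$ for every solution $U$ by a $\theta$-contracting chain of events $\maB_p\supset\maB_{p-1}\supset\cdots$ seeded by the Ergodic Lemma (the event $\maC$). Your version dispenses with the intermediate recursion entirely, establishes the dominance by a direct pigeonhole on the past arrival indices, and uses a much shorter renovating event, $R_n=\{\tau\circ\theta^n>Z_1\circ\theta^n\vee\sigma\circ\theta^n\}$, which drains all $p$ servers in a single step; this simplifies the coupling-from-the-past construction considerably, and indeed does not appear to invoke condition~(\ref{eq:hypoGp2}) at all. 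The existence part of your argument (domination of the empty-start iterates $U^{[N]}$, renovation at $\nu$, passage to the limit) is correct, since the empty-start iterates genuinely admit the physical interpretation you rely on.

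The gap is in the deterministic domination for an \emph{arbitrary} solution $U$. You write that ``at time $0$ the $q\le p$ busy servers correspond to customers $C_{-k_i}$ with distinct arrival indices'' and apply a pigeonhole to those indices. For the empty-start trajectory this is clear, but for a general $\overline{(\R_+)^p}$-valued random variable $U$ satisfying $U\circ\theta=H^p(U)$ a.s.\ there is no a priori reason why every strictly positive coordinate traces back to a finite-past arrival epoch: a coordinate could in principle ``have been busy since $-\infty$'' along its backward trace, in which case no index $k^*$ exists and the pigeonhole collapses. Ruling this out is precisely the non-trivial content that the paper supplies by the $\theta$-contracting argument of Proposition~\ref{prop:stabLoss} (first $\pr{\maC}>0$ via the Ergodic Lemma, then $\maB_p$ a.s., then the downward induction giving $U\prec Y^p\prec Z^p$). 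Without some replacement for that step, both your assertion that any other solution $\tilde U$ satisfies $\tilde U\prec(Z_p,\dots,Z_1)$ and therefore renovates at $\nu$, and the claimed bound~(\ref{eq:compareUZ}) for $U^p$ itself once uniqueness is in doubt, are not fully justified. If you add a lemma establishing $U(p)\le Z_1$ for every solution (e.g.\ by noting that $\{U(p)\le Z_1\}$ is $\theta$-contracting and non-negligible, exactly as in the paper), the rest of your argument goes through and arguably yields the cleaner proof.
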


In section \ref{subsec:proofLoss}, Theorem \ref{thm:stabLoss} is shown to be a corollary of a stronger result, 
Proposition \ref{prop:stabLoss}. 

\bigskip

After this detour by loss systems with multiple-servers, we now have a clearer interpretation of Theorem \ref{thm:stabGp}: 
under (\ref{eq:hypoGp2}), $U^p(1)$ is the unique stationary least workload of a loss system of $p$ servers, so 
$\pr{U^p(1)>0}$ can be interpreted as the loss probability for such system (denote $\P_{\mbox{\tiny{loss}}}$, the latter). 
Therefore, from (\ref{eq:compareUZ}), condition (\ref{eq:hypoGp1}) entails that 
\begin{equation}
\label{eq:interpret}
{\P_{\mbox{\tiny{loss}}} \over \esp{\tau}} < {S-p \over \esp{\sigma}}.
\end{equation}
In other words, the sufficient condition (\ref{eq:hypoGp1}) guarantees that the intensity of the overflow process of a 
loss system of $p$ servers does not exceed the service rate of a JSW system of $S-p$ servers. This suggests that, 
as it approaches saturation, the system tends to behave similarly as a two-parts system: the first one has $p$ servers and no waiting line;  the second one has $S-p$ servers having each its own line and works in FIFO. The customers try to enter the first system and, if the latter is full ({\em i.e.} all $p$ servers are busy), they are re-directed to the second one. 
We do not attempt to prove this "assymptotic splitting" of the system in heavy traffic under the ongoing stationary ergodic 
assumptions. Notice that in this general context, it seems that the bound in (\ref{eq:hypoGp1}) is somewhat optimal, as it relies on the 
"nearest" monotonic approximation of the system. 
However, we have reasons to believe that (\ref{eq:interpret}) is necessary and sufficient for stability under more restrictive, such as Markov, assumptions. The proof of this conjecture would require the use of completely different mathematical tools, and is left for future research.

\section{Three Stochastic recursions}
\label{sec:aux}
In this section we introduce and study three abstract auxiliary stochastic recursions that will be useful for addressing the stability of 
J$_{p+1}$SW systems and loss systems, using stochastic comparison. Throughout this section, we work on the Palm 
space $\mathscr Q$ of the arrival process, under the ongoing assumptions on the random variables $\sigma$ and $\tau$. 

\bigskip

Fix two integers $p \ge 1$, $S\ge 2$ such that $p \le S-1$. Define on $\mathscr Q$ the following random maps,  
\[\Gamma^p:\left\{\begin{array}{lll}
\overline{\left(\R_+\right)^p} &\longrightarrow &\overline{\left(\R_+\right)^p}\\
                  u &\longmapsto &v \,\,\mbox{ such that }\\
                  & &v(j)=\left[u(j+1)-\tau\right]^+,\,\,j\in \llbracket 1,p-1 \rrbracket;\\
                  & &v(p)=\biggl[\Bigl(u(p)\vee \sigma\Bigl) - \tau\biggl]^+,\\
                           \end{array}
                  \right.\] 
\[\Psi^p:\left\{\begin{array}{lll}
\overline{\left(\R_+\right)^p} &\longrightarrow &\overline{\left(\R_+\right)^p}\\
                  u &\longmapsto &v \,\,\mbox{ such that }\\
                & &v(j)=\biggl[\Bigl( u(j)\vee \sigma\Bigl) \wedge\, u(j+1)-\tau\biggl]^+,\,\,j\in \llbracket 1,p-1 \rrbracket;\\
                & &v(p)=\biggl[\Bigl( u(p)\vee \sigma\Bigl) -\tau\biggl]^+,
                  \end{array}
                  \right.\]
\[\Phi^{p}:\left\{\begin{array}{ll}
\overline{\left(\R_+\right)^S} &\longrightarrow \left(\R_+\right)^S\\
                  u &\longmapsto v \,\,\mbox{ such that }\\
                 & v(j)=\biggl[\Bigl( u(j)\vee \sigma\Bigl) \wedge\, u(j+1)-\tau\biggl]^+,\,\,j\in \llbracket 1,p \rrbracket;\\
                 & v(j)=\biggl[\Bigl( u(j)\vee \left(\sigma+u(p+1)\ind_{\{u(1)>0\}}\right)\Bigl) \wedge\, u(j+1)-\tau\biggl]^+,\,j \in \llbracket p+1,S-1 \rrbracket;\\
                 & v(S)=\biggl[\Bigl( u(S)\vee \left(\sigma+u(p+1)\ind_{\{u(1)>0\}}\right)\Bigl)-\tau\biggl]^+.
                  \end{array}
                  \right.\] 
                  
\bigskip

We first have the following elementary result, 
\begin{lemma}
\label{lemma:stabGamma}
The functional equation
\begin{equation}
\label{eq:recurstatGamma}
Z\circ\theta=\Gamma^p(Z),\,\mbox{ a.s.}
\end{equation}
admits the only $\overline{\left(\R_+\right)^p}$-valued solution 
\begin{equation}
\label{eq:solstatGamma}
Z^p:=\left(Z_p,Z_{p-1},...,Z_1\right),
\end{equation}
where the $Z_i$'s are defined by (\ref{eq:defZ}). 
\end{lemma}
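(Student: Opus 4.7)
I first check $Z^p\in\overline{(\R_+)^p}$: the defining set $\{k\geq \ell\}$ shrinks as $\ell$ grows, so $\ell\mapsto Z_\ell$ is non-increasing and $Z^p=(Z_p,Z_{p-1},\ldots,Z_1)$ is indeed fully ordered. Next I verify $Z^p\circ\theta=\Gamma^p(Z^p)$ coordinate by coordinate. For $m\in\llbracket 2,p\rrbracket$, the index substitution $k'=k-1$, $i'=i-1$ in the sup defining $Z_m\circ\theta$ isolates a $-\tau$ term and gives
\[
Z_m\circ\theta=\left[\sup_{k\geq m-1}\left(\sigma\circ\theta^{-k}-\sum_{i=1}^k\tau\circ\theta^{-i}\right)-\tau\right]^+=[Z_{m-1}-\tau]^+,
\]
which is exactly $\Gamma^p(Z^p)(p+1-m)$. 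For $m=1$ the same shift separates out a free term $k'=0$ equal to $\sigma$, giving $Z_1\circ\theta=[(Z_1\vee\sigma)-\tau]^+=\Gamma^p(Z^p)(p)$.

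\textbf{Uniqueness via cascading.} Let $Z'$ be another $\overline{(\R_+)^p}$-valued solution. The map $\Gamma^p$ is triangular: the last coordinate $W:=Z'(p)$ satisfies the autonomous scalar recursion $W\circ\theta=[(W\vee\sigma)-\tau]^+$, while for $j<p$ the relation $Z'(j)\circ\theta=[Z'(j+1)-\tau]^+$ determines $Z'(j)=[Z'(j+1)\circ\theta^{-1}-\tau\circ\theta^{-1}]^+$. Once $W=Z_1$ is established, I propagate by induction on $m$: if $Z'(p+1-m)=Z_m$, the identity $[[A]^+-c]^+=[A-c]^+$ (valid for $c\geq 0$) applied to the sup defining $Z_m$ yields
\[
[Z_m\circ\theta^{-1}-\tau\circ\theta^{-1}]^+=\left[\sup_{k\geq m+1}\left(\sigma\circ\theta^{-k}-\sum_{i=1}^k\tau\circ\theta^{-i}\right)\right]^+=Z_{m+1},
\]
so that $Z'(p-m)=Z_{m+1}$, and eventually $Z'=Z^p$.

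\textbf{The scalar recursion.} The map $x\mapsto[(x\vee\sigma)-\tau]^+$ is non-decreasing in $x$, so Loynes's theorem applied to the associated monotonic SRS gives $W\geq Z_1$ a.s. For the reverse inequality I iterate backward $k$ steps, using the algebra of $[\cdot]^+$ and $\vee$, to obtain the identity, valid a.s.\ for every $k\geq 1$,
\[
W=\max\left(W\circ\theta^{-k}-\sum_{i=1}^k\tau\circ\theta^{-i},\;\max_{1\leq j\leq k}\left(\sigma\circ\theta^{-j}-\sum_{i=1}^j\tau\circ\theta^{-i}\right),\;0\right).
\]
On the event $B:=\{W>Z_1\}$, the second and third arguments of the outer max are bounded above by $Z_1<W$, so the identity forces $W\circ\theta^{-k}=W+\sum_{i=1}^k\tau\circ\theta^{-i}$ for every $k\geq 1$. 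Since $\tau>0$ a.s.\ and $\esp{\tau}>0$, $\sum_{i=1}^k\tau\circ\theta^{-i}\to+\infty$ a.s., whence on $B$ the stationary sequence $W\circ\theta^{-k}$ diverges pointwise to $+\infty$. Consequently $B\subset\liminf_k\{W\circ\theta^{-k}>M\}$ for every $M>0$, and Fatou's lemma for sets combined with stationarity yields $\pr{B}\leq\liminf_k\pr{W\circ\theta^{-k}>M}=\pr{W>M}$; letting $M\to\infty$ and using a.s.\ finiteness of $W$ forces $\pr{B}=0$.

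The main obstacle is this final step, which must proceed without assuming $W$ is integrable (so one cannot simply invoke $W\circ\theta^{-k}/k\to 0$ via Borel--Cantelli). The Fatou-for-liminf argument sidesteps this by exploiting only that $\{W>Z_1\}$, if non-negligible, would force a genuine pointwise divergence of a stationary sequence whose marginal is a.s.\ finite---which is incompatible with $\pr{W<\infty}=1$.
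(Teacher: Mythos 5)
Your proof is correct and the overall architecture — reducing to the autonomous scalar recursion on the last coordinate and then cascading downward — is the same as the paper's. The differences are localized but real. For existence, you verify $Z^p\circ\theta=\Gamma^p(Z^p)$ directly by an index shift in the supremum together with the identity $[[A]^+-c]^+=[A-c]^+$ for $c\ge 0$, whereas the paper invokes Loynes's Theorem to produce a minimal solution and then identifies it coordinate by coordinate; both routes work, and your computation is the more explicit one. The more substantive divergence is in the scalar step: the paper disposes of the uniqueness of the solution to $W\circ\theta=\left[(W\vee\sigma)-\tau\right]^+$ by citing Lemma~1 of Moyal (2010), while you re-derive it from scratch. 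Your argument — Loynes for the lower bound $W\ge Z_1$, then $k$-step backward unfolding to show that on $\{W>Z_1\}$ the outer maximum must be attained at the term $W\circ\theta^{-k}-\sum_{i=1}^k\tau\circ\theta^{-i}$, forcing $W\circ\theta^{-k}\to\infty$ there, and finally Fatou-for-sets plus stationarity to contradict almost-sure finiteness of $W$ — is sound and carefully avoids the temptation to assume $W$ integrable, which is indeed the delicate point a naive ergodic-lemma argument would stumble on. The trade-off is that your version is longer but self-contained; the paper's is shorter but leans on an external reference.
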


\begin{proof}
The random map $\Gamma^p$ is a.s. $\prec$-non decreasing and continuous, hence 
a straightforward application of Loynes's Theorem establishes the existence 
of a minimal $\overline{\left(\R_+\right)^p}$-valued solution $Z^p$ to (\ref{eq:recurstatGamma}), 
thereby satisfying 
\[\left\{\begin{array}{ll}
Z^p(p)\circ\theta &=\left[Z^p(p)\vee \sigma - \tau\right]^+;\\
Z^p(j)\circ\theta &=\left[Z^p(j+1)-\tau\right]^+,\,\,j\in \llbracket 1,p-1 \rrbracket.
\end{array}
\right.\]
But we know from Lemma 1 of \cite{Moy09} that the only solution of the recursion  
\[Z\circ\theta=\left[Z\vee\sigma-\tau\right]^+\] 
is precisely $Z_1,$ so we have $Z^p(p)=Z_1$ a.s..  
This implies that   
\[Z^p(p-1)=\left[Z_1\circ\theta^{-1}-\tau\circ\theta^{-1}\right]^+=Z_2\,\mbox{ a.s. }\]
and in turn, by an immediate induction, that $Z^p(p-j)=Z_{j+1}$ a.s. for all 
$j\in \llbracket 1,p-1\rrbracket$. 
\end{proof}

Consequently, 
\begin{corollary}
\label{cor:stabPsi}
The recursive equation
\begin{equation}
\label{eq:recurstatPsi}
Y\circ\theta=\Psi^p(Y),\,\mbox{ a.s.}
\end{equation}
admits at least one $\overline{\left(\R_+\right)^p}$-valued solution.  
Furthermore, any such solution $Y$ is such that 
\begin{align}
Y(p) &=Z_1 \mbox{ a.s.};\nonumber\\
    Y&\prec Z^p\, \mbox{ a.s.},\label{eq:compareYZ}
\end{align}
for $Z^p$ defined by (\ref{eq:solstatGamma}), and the solution is unique if it holds that 
\begin{equation}
\label{eq:condstabGamma}
\pr{Z_1 = 0}>0.
\end{equation}
\end{corollary}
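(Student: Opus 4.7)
Existence will follow from a standard Loynes argument. I would first check that $\Psi^p$ is a.s. $\prec$-non-decreasing and continuous: each coordinate of the image is a non-decreasing function of the coordinates of $u$, and the inequalities $v(j)\leq v(j+1)$ follow from $(u(j)\vee\sigma)\wedge u(j+1)\leq u(j+1)\leq (u(j+1)\vee\sigma)\wedge u(j+2)$. Next I would compare $\Psi^p$ with $\Gamma^p$ coordinate by coordinate: since $(u(j)\vee\sigma)\wedge u(j+1)\leq u(j+1)$ for $j<p$ and the $p$-th coordinates of $\Psi^p$ and $\Gamma^p$ coincide, we have $\Psi^p(u)\prec \Gamma^p(u)$ for every $u\in\overline{(\R_+)^p}$. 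The Loynes sequence built from $\Psi^p$ starting at $\mathbf 0$ is thus $\prec$-non-decreasing and dominated by the corresponding $\Gamma^p$-Loynes sequence, which converges a.s. to $Z^p$ by Lemma~\ref{lemma:stabGamma}. Monotone convergence and continuity of $\Psi^p$ then produce a finite $\overline{(\R_+)^p}$-valued solution $Y$ to (\ref{eq:recurstatPsi}).

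For the identification $Y(p)=Z_1$ for any solution $Y$, I note that the $p$-th coordinate of the recursion decouples, reading $Y(p)\circ\theta=[(Y(p)\vee\sigma)-\tau]^+$; by Lemma~1 of \cite{Moy09}, this scalar recursion admits the unique solution $Z_1$. To obtain $Y\prec Z^p$ a.s. for any solution, I would then proceed by backward induction on $j$ from $p$ to $1$. Assuming $Y(j+1)\leq Z_{p-j}$ a.s., I apply the recursion at shift $-1$ and use the bound $(u(j)\vee\sigma)\wedge u(j+1)\leq u(j+1)$ to obtain
\[
Y(j)\;\leq\;\bigl[Y(j+1)\circ\theta^{-1}-\tau\circ\theta^{-1}\bigr]^+\;\leq\;\bigl[Z_{p-j}\circ\theta^{-1}-\tau\circ\theta^{-1}\bigr]^+\;=\;Z_{p+1-j},
\]
which closes the induction.

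For uniqueness under (\ref{eq:condstabGamma}), the identification $Y(p)=Z_1$ combined with the increasing ordering of $Y$ shows that on $\{Z_1=0\}$ every solution satisfies $Y=\mathbf 0_p$. Since $\P(Z_1=0)>0$, ergodicity of $\theta$ ensures that a.s. there exists $n\geq 0$ with $Z_1\circ\theta^{-n}=0$. Any two solutions $Y,Y'$ therefore coincide at time $-n$ (both equal $\mathbf 0_p$ there), and iterating (\ref{eq:recurstatPsi}) forward $n$ times yields $Y=Y'$ a.s.

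I expect the main obstacle to be the induction step establishing $Y\prec Z^p$ for an \emph{arbitrary} (not necessarily Loynes-minimal) solution: one must exploit the specific $\wedge\, u(j+1)$ feature of $\Psi^p$ (which is exactly what distinguishes $\Psi^p$ from $\Gamma^p$) in order to propagate coordinate-wise upper bounds from $j+1$ to $j$ without circular reasoning. Once this inductive bound is in place, the remaining verifications reduce to direct inspection of scalar recursions and a routine ergodic recurrence argument.
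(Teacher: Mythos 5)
Your proof is correct, and for the key inequality $Y\prec Z^p$ it takes a genuinely simpler route than the paper. The paper proves $Y\prec Z^p$ indirectly: it shows that the event $\maB=\{Y\prec Z^p\}$ is $\theta$-contracting (via a forward induction conditioned on $\maB$), and separately shows $\pr{\maB}>0$ by exhibiting the non-negligible set $\{Z_1\le\sigma\}\subset\{Y=Z^p\}$; ergodicity then forces $\pr{\maB}=1$. Your backward induction is unconditional and pathwise: $Y(p)=Z_1=Z^p(p)$ is the base case, and the recursion combined with the coordinate-wise bound $(Y(j)\vee\sigma)\wedge Y(j+1)\le Y(j+1)$ directly yields $Y(j)\le[Y(j+1)\circ\theta^{-1}-\tau\circ\theta^{-1}]^+\le[Z^p(j+1)\circ\theta^{-1}-\tau\circ\theta^{-1}]^+=Z^p(j)$, the last equality being the recursion for $Z^p$ from Lemma~\ref{lemma:stabGamma}. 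This removes the need for the $\theta$-contracting-event machinery and the positivity-of-$\pr{\maB}$ step; what the paper's approach buys instead is the additional piece of information that $\{Z_1\le\sigma\}\subset\{Y=Z^p\}$, which you do not obtain. For uniqueness, your argument is essentially the paper's renovating-events argument of length one, unpacked into a concrete recurrence/coupling statement (both solutions hit $\mathbf 0_p$ on $\{Z_1=0\}$, a recurrent event, and the forward recursion is a deterministic function of the past sample, hence they agree forever after); this is correct but not substantially different. Your finiteness argument for the Loynes limit (domination of the $\Psi^p$-Loynes sequence by the $\Gamma^p$-Loynes sequence, using $\Psi^p\prec\Gamma^p$ pointwise) is also a slightly more explicit treatment than the paper's, which lets finiteness follow a posteriori from $Y\prec Z^p$.
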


\begin{proof}
As is easily checked, the map $\Psi^{p}$ is a.s. $\prec$-non-decreasing and continuous, 
so Loynes's Theorem guarantees once again the existence of a solution to (\ref{eq:recurstatPsi}).  
Fix such a solution $Y$. We have, first, that  
$Y(p)\circ\theta=\left[Y(p)\vee \sigma -\tau\right]^+$ a.s. and therefore $Y(p)=Z_1$ a.s.. 
Consider the event $\maB=\left\{Y \prec Z^p\right\}.$ 
On $\maB$, we have that 
\begin{align*}
Y(p-1)\circ\theta = \left[\left(Y(p-1)\vee \sigma\right)\wedge Z_1 - \tau\right]^+
                  &\le \left[\left(Z_2\vee \sigma\right)\wedge Z_1 - \tau\right]^+\\
                  &\le \left[Z_1-\tau\right]^+=Z_2\circ\theta=Z^p(p-1)\circ\theta.
                  \end{align*}
This implies in turn that
\begin{align*}
Y(p-2)\circ\theta = \left[\left(Y(p-2)\vee \sigma\right)\wedge Z_2 - \tau\right]^+
                  &\le \left[\left(Z_3\vee \sigma\right)\wedge Z_2 - \tau\right]^+\\
                  &\le \left[Z_2-\tau\right]^+=Z_3\circ\theta=Z^p(p-2)\circ\theta 
                  \end{align*}
and then, by induction on $i$, that $Y(p-i)\circ\theta \le  Z_{i+1}\circ\theta$ for all $i\in \llbracket 1,p-1 \rrbracket$. 
In other words, the event $\maB$ is $\theta$-contracting. 

On the other hand, remark that $Z_1 > \sigma$ a.s. would imply by $\theta$-invariance that 
$Z_1\circ\theta>0$ a.s. and therefore that $Z_1\circ\theta=Z_1-\tau$ 
a.s. and hence $\esp{Z_1-Z_1\circ\theta}<0,$ a contradiction to the Ergodic Lemma. Therefore, we have that $\pr{Z_1 \le \sigma}$. But on 
$\{Z_1 \le \sigma\}$, we have in turn that $Y(p-1)\circ\theta=\left[Z_1 -\tau\right]^+$, or in other words 
\[Y(p-1)=\left[Z_1\circ\theta^{-1} -\tau\circ\theta^{-1}\right]^+=Z_2.\]
On and on, we obtain that 
\[\{Z_1 \le \sigma\} \subset \left\{Y = Z^p\right\} \subset \maB.\] 
We have proven that the event on the left-hand side is non-negligible, and that $\maB$ is $\theta$-contracting  
and thereby of probability 0 or 1: therefore $\maB$ is almost sure, or in other words $Y \prec Z^p \mbox{ a.s..}$

\bigskip

To show uniqueness, define the family of random variables
\[\mathscr I=\left\{\overline{(\R_+)^p}-\mbox{ valued random variables }X:\, X\prec Z^p\mbox{ a.s.}\right\}\] 
and the event \[\maA=\{Z_1=0\}.\] 
From the very definition of the maps $\Gamma^p$ and $\Psi^p$ we have for all $u,v$ in $\overline{(\R_+)^p}$ that 
\[u \prec v \Longrightarrow  \Psi^p(u) \prec \Gamma^p(v)\mbox{ a.s..}\] 
Therefore, for any solution $Y$ to (\ref{eq:recurstatPsi}), as $Y \in \mathscr I$ we obtain by an immediate induction that for all $n$, 
\[Y\circ\theta^n=\Psi^p\circ\theta^n(Y) \prec \Gamma^p\circ\theta^n\left(Z^p\right) = Z^p\circ\theta^n.\]
In particular, 
\[\theta^{-n}\maA = \{Z^p=\mathbf 0\} = \{Y=\mathbf 0\},\]
so $\left\{\theta^{-n}\maA\right\}$ is a stationary sequence of renovating events of length 1 (see \cite{Bor84}), for any sequence $\{Y\circ\theta^n\}$ with $Y$ a solution to (\ref{eq:recurstatPsi}). Then, (\ref{eq:condstabGamma}) amounts to $\pr{\maA}>0$, which implies the uniqueness of the solution $Y^p$, using Remark 2.5.3 of \cite{BacBre02}.                
\end{proof}

Let us now focus on the random maps $\Phi^{p}$. First observe that  

\begin{lemma}
\label{lemma:H}
Let $p\in \llbracket 1,S-1 \rrbracket$. Then,  
\begin{itemize}
\item[(i)] $\overline {(\R_+)^S}$ is almost surely stable by $\Phi^p$,  
\item[(ii)] $\Phi^{p}$ is $\prec$-nondecreasing;
\item[(iii)] For any $u \in \overline {(\R_+)^S}$ and $v\in\overline {(\R_+)^p}$, we have 
\[u[p] \prec v \Longleftrightarrow \Phi^p\left(u\right)[p] \prec \Psi^p(v)\,\mbox{ a.s..}\]
\end{itemize}
\end{lemma}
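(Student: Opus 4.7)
I would attack the three items in sequence by unpacking the definition of $\Phi^p$ and exploiting only the fact that $\vee$, $\wedge$, $+$ and $[\,\cdot\,]^+$ are coordinate-wise non-decreasing; all arguments are pathwise, so no probabilistic ingredient is really needed beyond the fact that $\sigma,\tau$ are fixed in each application.

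For (i), I would introduce the auxiliary threshold
\[
a(j) := \begin{cases} u(j) \vee \sigma, & j \in \llbracket 1, p \rrbracket, \\ u(j) \vee \bigl(\sigma + u(p+1)\ind_{\{u(1)>0\}}\bigr), & j \in \llbracket p+1, S \rrbracket, \end{cases}
\]
so that $\Phi^p(u)(j) = [a(j) \wedge u(j+1) - \tau]^+$ for $j < S$ and $\Phi^p(u)(S) = [a(S) - \tau]^+$. The sequence $a(\cdot)$ is non-decreasing: within each block the threshold is fixed and $u$ is non-decreasing, and at the boundary one has $a(p) = u(p) \vee \sigma \le u(p+1) \vee \sigma \le a(p+1)$ since $u(p+1)\ind_{\{u(1)>0\}} \ge 0$. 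The consecutive bound $a(j) \wedge u(j+1) \le u(j+1) \le a(j+1) \wedge u(j+2)$ then yields $\Phi^p(u)(j) \le \Phi^p(u)(j+1)$ after applying $[\cdot - \tau]^+$, and similarly for $j = S-1$ via $u(S) \le a(S)$.

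Part (ii) follows from the same anatomy: each coordinate of $\Phi^p(u)$ is a composition of the non-decreasing operations $\vee, \wedge, +, [\cdot]^+$ in coordinates of $u$ and in the expression $\sigma + u(p+1)\ind_{\{u(1)>0\}}$, which is itself non-decreasing in $(u(1), u(p+1))$ since $u(1) \mapsto \ind_{\{u(1)>0\}}$ is non-decreasing with values in $\{0,1\}$ and $u(p+1) \ge 0$. The pointwise domination $u \prec u'$ therefore propagates coordinate-by-coordinate to $\Phi^p(u) \prec \Phi^p(u')$.

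For (iii), I focus first on the forward implication, which is the one actually invoked in the subsequent stochastic comparisons. Assuming $u[p] \prec v$, for $j \in \llbracket 1, p-1 \rrbracket$ both $\Phi^p(u)(j)$ and $\Psi^p(v)(j)$ are given by the same formula $[(\cdot \vee \sigma) \wedge (\cdot) - \tau]^+$ in coordinates indexed up to $p$, and $u(j) \le v(j)$, $u(j+1) \le v(j+1)$ deliver $\Phi^p(u)(j) \le \Psi^p(v)(j)$ via (ii). The only genuine step is the boundary index $j = p$, where $\Phi^p(u)(p)$ carries an extra $\wedge u(p+1)$ absent from $\Psi^p(v)(p)$; I discard it through
\[
(u(p) \vee \sigma) \wedge u(p+1) \le u(p) \vee \sigma \le v(p) \vee \sigma,
\]
and then $[\cdot - \tau]^+$ gives the desired inequality $\Phi^p(u)(p) \le \Psi^p(v)(p)$. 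I expect the main obstacle to be the reverse implication: a natural contraposition requires exhibiting, whenever $u(j_0) > v(j_0)$, a positive-probability event on which $\Phi^p(u)(j_0) > \Psi^p(v)(j_0)$, and this is clean only when the joint law of $(\sigma,\tau)$ is sufficiently non-degenerate near zero (so that the outer $[\,\cdot\,]^+$ does not swallow the gap). I would either prove the converse under such implicit non-degeneracy, or read $\Longleftrightarrow$ as the forward implication, since that is what is needed to compare the service-profile SRS to the one driven by $\Psi^p$ in the proofs of Propositions \ref{prop:stabGp} and \ref{prop:stabLoss}.
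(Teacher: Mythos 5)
Your proof is correct and follows essentially the same route as the paper, with your $a(\cdot)$ reformulation in (i) being a clean repackaging of the paper's coordinate-by-coordinate inequalities. Your sharpest observation concerns (iii): you are right that only the implication $u[p]\prec v \Longrightarrow \Phi^p(u)[p]\prec\Psi^p(v)$ can be proved unconditionally, and this is in fact all the paper establishes in its own proof of (iii) (the two displayed chains there are exactly your inequalities for $j=p$ and $j\in\llbracket 1,p-1\rrbracket$), and all that is invoked later, in the inductive step of Proposition~\ref{prop:minsol}. The "$\Longleftrightarrow$" in the lemma's statement is an overstatement — the converse fails trivially when $\tau$ is large enough that both sides vanish — so reading it as "$\Longrightarrow$" is the correct resolution, not a weakening of your argument.
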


\begin{proof}
(i) Plainly, if $p\ge 2$, for all $u \in \overline {(\R_+)^S}$ and all $j \in \llbracket 1,p-1 \rrbracket$, 
\[\Phi^{p}(u)(j)=\left[\left(u(j)\vee \sigma\right) \wedge u(j+1)-\tau\right]^+
             \le \left[\left(u(j+1)\vee \sigma\right) \wedge u(j+2)-\tau\right]^+
             =\Phi^{p}(u)(j+1).\]
Also, if $p<S$, we have that 
\begin{align*}
\Phi^{p}(u)(p)&=\left[\left(u(p)\vee \sigma\right) \wedge u(p+1)-\tau\right]^+\\
               &\le\left[\left(u(p+1)\vee \left(\sigma+u(p+1)\ind_{\{u(1)>0\}}\right)\right) \wedge u(p+2)-\tau\right]^+
               =\Phi^{p}(u)(p+1),
\end{align*}
whereas for all $j \in \llbracket p,S \rrbracket$,  
\begin{align*}
\Phi^{p}(u)(j)&=\left[\left(u(j)\vee \left(\sigma+u(p+1)\ind_{\{u(1)>0\}}\right)\right) \wedge u(j+1)-\tau\right]^+\\
               &\le\left[\left(u(j+1)\vee \left(\sigma+u(p+1)\ind_{\{u(1)>0\}}\right)\right) \wedge u(j+2)-\tau\right]^+
               =\Phi^{p}(u)(j+1).
\end{align*}

\bigskip
\noindent
(ii) Let $u \prec v$ in $\overline {(\R_+)^S}$. Then for all $j \le p$, 
\begin{equation*}
\Phi^{p}(u)(j)=\left[\left(u(j)\vee \sigma\right) \wedge u(j+1)-\tau\right]^+
             \le \left[\left(v(j)\vee \sigma\right) \wedge v(j+1)-\tau\right]^+
             =\Phi^{p}(v)(j)
\end{equation*}
and for $j \ge p+1$,  
\begin{align*}
\Phi^{p}(u)(j)&=\left[\left(u(j)\vee \left(\sigma+u(p)\ind_{\{u(1)>0\}}\right)\right) \wedge u(j+1)-\tau\right]^+\\
             &\le \left[\left(v(j)\vee \left(\sigma+v(p)\ind_{\{v(1)>0\}}\right)\right) \wedge v(j+1)-\tau\right]^+
             =\Phi^{p}(v)(j).
\end{align*}
\bigskip
\noindent
(iii) Let $u \in \overline {(\R_+)^S}$ and $v\in\overline {(\R_+)^p}$ such that $u[p] \prec v$. Then we have a.s. 
\[\Phi^p(u)(p) =\biggl[\Bigl( u(p)\vee \sigma\Bigl) \wedge\, u(p+1)-\tau\biggl]^+
               \le \biggl[\Bigl( u(p)\vee \sigma\Bigl)-\tau\biggl]^+
               \le \biggl[\Bigl( v(p)\vee \sigma\Bigl)-\tau\biggl]^+=\Psi^p(v)(p),\]
whereas for any $j\in \llbracket 1,p-1 \rrbracket$ we readily have 
\[\Phi^p(u)(p) =\biggl[\Bigl( u(j)\vee \sigma\Bigl) -\tau\biggl]^+
                  \le \biggl[\Bigl( v(j)\vee \sigma\Bigl)-\tau\biggl]^+=\Psi^p(v)(j).\]
\end{proof}

We have the following result, 
\begin{proposition}
\label{pro:stabH}
Let $p < S$. If the following condition holds,
\begin{equation} 
\esp{\sigma\ind_{\{Y(1)>0\}}} <(S-p)\esp{\tau},\,\mbox{ for any solution $Y$ to (\ref{eq:recurstatPsi})}, \label{eq:hypoPhi}
\end{equation}
the functional equation
\begin{equation}
\label{eq:recurstatPhi}
V \circ\theta= \Phi^{p}(V),\, \P-\mbox{a.s.} 
\end{equation}
admits at least one $\overline {(\R_+)^S}$-valued solution $V$.
\end{proposition}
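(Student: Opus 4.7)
The strategy is to apply Loynes's Theorem to $\Phi^p$ and to control the resulting minimal solution on both coordinate blocks. By Lemma~\ref{lemma:H}(i)--(ii), $\Phi^p$ a.s.\ preserves $\overline{(\R_+)^S}$ and is $\prec$-nondecreasing and continuous, so the Loynes sequence $\mathbf V_0=\mathbf 0$, $\mathbf V_{n+1}=\Phi^p\circ\theta^{-1}(\mathbf V_n\circ\theta^{-1})$ is monotone, converges a.s.\ to a limit $\mathbf V\in[0,+\infty]^S$, and solves~(\ref{eq:recurstatPhi}) as soon as it is a.s.\ finite; that finiteness is the only thing left to prove.

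For the first $p$ coordinates, an induction based on Lemma~\ref{lemma:H}(iii) yields $\mathbf V_n[p]\prec\mathbf Y_n$ for every $n$, where $\{\mathbf Y_n\}$ is the Loynes sequence associated with $\Psi^p$; passing to the limit gives $\mathbf V[p]\prec Y^p$ for some solution $Y^p$ of~(\ref{eq:recurstatPsi}), which by Corollary~\ref{cor:stabPsi} satisfies $Y^p\prec Z^p$ a.s.\ and is therefore finite. In particular $\mathbf V_n(1)\le Y^p(1)$, so $\ind_{\{\mathbf V_n(1)>0\}}\le\ind_{\{Y^p(1)>0\}}$ a.s.\ for every $n$.

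For the last $S-p$ coordinates I would introduce the auxiliary random map $\tilde\Phi$, obtained from $\Phi^p$ by substituting the exogenous stationary indicator $\ind_{\{Y^p(1)>0\}}$ for the endogenous $\ind_{\{u(1)>0\}}$. This $\tilde\Phi$ remains $\prec$-nondecreasing and continuous; its restriction $\tilde\Phi_U$ to the upper block is an \emph{autonomous} recursion on $\overline{(\R_+)^{S-p}}$, coinciding with the standard $(S-p)$-server Kiefer--Wolfowitz FCFS update (arrival $\sigma$) on $\{Y^p(1)>0\}$ and with an ``insert-$\sigma$-and-drop-smallest'' operation on $\{Y^p(1)=0\}$. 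Since the upper block of $\Phi^p$ is nondecreasing in the indicator, the bound from the previous step propagates by induction: $\mathbf V_n\prec\tilde{\mathbf W}_n$ for every $n$, with $\{\tilde{\mathbf W}_n\}$ the Loynes sequence for $\tilde\Phi$. The proof thus reduces to the a.s.\ finiteness of $\tilde{\mathbf W}$.

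The main obstacle lies in this last reduction. The FCFS regime contributes the expected drift $\esp{\sigma\ind_{\{Y^p(1)>0\}}}-(S-p)\esp{\tau}<0$ by~(\ref{eq:hypoPhi}), but the insert-and-drop regime on $\{Y^p(1)=0\}$ may still inject positive work of size $(\sigma-u(p+1))^+$ into the upper coordinates, a contribution that a naive drift bound does not control under~(\ref{eq:hypoPhi}) alone. I would close the argument via the theory of renovating events of Borovkov--Foss~\cite{Bor84,Foss92}: along the excursions of $\{Y^p(1)>0\}$ one couples $\tilde\Phi_U$ with the genuine $(S-p)$-server FCFS system driven by the arrivals $\sigma\ind_{\{Y^p(1)>0\}}$ (finite by~(\ref{eq:hypoPhi})) and transfers its finiteness via a positive-probability stationary sequence of renovating events for $\tilde\Phi_U$, yielding the required a.s.\ finiteness of $\tilde{\mathbf W}$ and hence of $\mathbf V$.
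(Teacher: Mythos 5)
Your plan is sound through the introduction of $\tilde\Phi$ and the domination $\mathbf V_n\prec\tilde{\mathbf W}_n$, and you correctly identify the genuine obstacle — on $\{Y^p(1)=0\}$ the upper block still receives the injection $(\sigma-u(p+1))^+$, which a raw drift inequality cannot discard under~(\ref{eq:hypoPhi}) alone. But your proposed resolution by coupling with the genuine $(S-p)$-server FCFS system driven by $\sigma\ind_{\{Y^p(1)>0\}}$ does not close. That FCFS map, call it $G'$, has \emph{no} arrival on $\{Y^p(1)=0\}$, whereas $\tilde\Phi_U$ performs an insert-and-drop step there; one checks directly (e.g.\ on the last coordinate, $[\;w(S-p)\vee\sigma-\tau\;]^+\ge [\;w(S-p)-\tau\;]^+$) that $\tilde\Phi_U\succ G'$ on $\{Y^p(1)=0\}$, not the other way around. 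So the finite stationary $W'$ for $G'$ does not dominate $\tilde{\mathbf W}_n$, and you therefore have no a priori bound on the $\tilde\Phi_U$-orbit from which a positive-probability stationary sequence of renovating events for $\tilde\Phi_U$ could be extracted. The sketch is circular: renovating events for a non-monotone modification are exactly what one needs a bound to produce, and the only candidate bound is the one you are trying to prove finite. Note also that Proposition~\ref{pro:stabH} carries no GI/GI or unbounded-$\tau$ hypothesis, so you cannot import the machinery from the proofs of the theorems.

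The paper closes the gap with a step you skipped and which makes the drift argument you dismissed work after all: it first proves~(\ref{eq:absurd2}), namely that for the Loynes limit $V^p$, $V^p(S)=\infty$ a.s.\ forces $V^p(i)=\infty$ for \emph{every} $i\in\llbracket p+1,S\rrbracket$. This zero-one dichotomy on the whole upper block is obtained via an Ergodic-Lemma contradiction involving the auxiliary sequence $M_n$, combined with the observation that $\{V^p(S)=\infty\}$ and $\{V^p(j)=\infty;\, j>p\}$ are $\theta$-contracting. Once this is in place, the problematic injection term vanishes in the limit (because $\mathbf V_n^p(p+1)\to\infty$), and the drift computation on $S_n=\sum_{i=1}^S\mathbf V_n^p(i)$ — together with the fact that on $\{V^p(p+1)=\infty\}$ the lower block $V^p[p]$ actually \emph{solves}~(\ref{eq:recurstatPsi}) and not merely is bounded by a solution — yields $\esp{\sigma\ind_{\{Y(1)>0\}}}\ge(S-p)\esp{\tau}$ for some solution $Y$, contradicting~(\ref{eq:hypoPhi}). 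You should replace the coupling sketch with this two-step scheme: prove the analogue of~(\ref{eq:absurd2}) first, then run the drift argument.
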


\begin{proof}
From Lemma \ref{lemma:H}, $\overline {(\R_+)^S}$ is a.s. stable by $\Phi^{p}$ and $\Phi^{p}$ is a.s. 
$\prec$-nondecreasing and clearly continuous. So we invoke once again Loynes's Theorem to claim the existence of a 
$\prec$-minimal solution $V^{p}$, 
towards which the Loynes's sequence corresponding to $\Phi^{p}$, which we denote $\{\mathbf V_n^p\}$ , 
tends a.s. coordinate-wise and increasingly. It remains to give the condition under which the r.v. $V^{p}$ is proper. 
Let us first prove that  
\begin{equation}\label{eq:absurd2}
\Bigl[V^{p}(S)=\infty\mbox{ a.s. }\Bigl]\,\, 
\Longleftrightarrow  \Bigl[V^{p}(i)=\infty\mbox{ for all }i\in \llbracket p+1,S \rrbracket \mbox{ a.s. }\Bigl].
\end{equation}
The latter proposition is non trivial only if $p<S-1$. In that case, suppose that 
\begin{equation}
\label{eq:absurd}
V^{p}(S)=\infty\mbox{ a.s. and }V^{p}(p+1)< \infty \mbox{ a.s..}
\end{equation} 
Define for all $n$, 
\[M_n=\left[\mathbf V_n^p(p+1)\ind_{\{\mathbf V_n^p(1)>0\}}-\sigma+\tau\right]^+.\]
The sequence $\{M_n\}$ is a.s. non-decreasing, and its almost sure limit $M$ reads 
\[M= \left[V^{p}(p+1)\ind_{\{V^{p}(1)>0\}}-\sigma+\tau\right]^+.\] 
Thus, we have for all $n$,  
\[
\mathbf V^p_{n+1}(S)\circ\theta= M_n \vee \left[\mathbf V_n^p(S)-\tau\right]^+ =\mathbf V_n^p(S)-\left(\mathbf V_n^p(S)-M_n\right)\wedge \tau
                                                            \le \mathbf V_n^p(S)-\left(\mathbf V_n^p(S)-M\right)\wedge \tau.
                                                            \]
As the sequence $\{\mathbf V_n^p(S)\}$ is a.s. non-decreasing, we therefore have for all $n$ that $$\esp{\left(\mathbf V_n^p(S)-M\right)\wedge \tau}\le 0$$ which, by monotone convergence, implies in turn that 
\[\esp{\left(V^{p}(S)-M\right)\wedge \tau}\le 0.\]
From (\ref{eq:absurd}), we have $V^{p}(S)=\infty$ and $M<\infty$ a.s., which implies that $\esp{\tau}\le 0$, 
an absurdity. Hence as the vector $V^{p}$ is sorted in ascending order, we have proven (\ref{eq:absurd2}).

\bigskip

Now, observe that for all $n \in \N$,  
\[\mathbf V^p_{n+1}(j)\circ\theta=\biggl[\Bigl( \mathbf V_n^p(j)\vee \left(\sigma+\mathbf V_n^p(p+1)\ind_{\{\mathbf V_n^p(1)>0\}}\ind_{\{j>p\}}\right)\Bigl) \wedge\, \mathbf V_n^p(j+1)-\tau\biggl]^+.\]
So taking the a.s. limit yields in particular
\begin{align}
V^{p}(p+1)\circ\theta &=\biggl[\Bigl(\sigma+V^{p}(p+1)\ind_{\{V^{p}(1)>0\}}\Bigl) \wedge\, V^{p}(p+2)-\tau\biggl]^+;\label{eq:VP+1}\\
V^{p}(S)\circ\theta &=\left[V^{p}(S)\vee \left(\sigma+V^{p}(p+1)\ind_{\{V^{p}(1)>0\}}\right)-\tau\right]^+.\label{eq:VS}
\end{align}
Therefore, both events $\{V^{p}(S)=\infty\}$ and $\{V^{p}(j)=\infty;\,j\in\llbracket p+1,S \rrbracket\}$ 
are $\theta$-contracting, and thereby, either negligible or almost sure. 
So in view of (\ref{eq:absurd2}), we are in the following alternative: 
\begin{equation*}
\Bigl[\text{$V^{p}(i) < \infty$ a.s. for all $i\in \llbracket p+1,S \rrbracket$}\Bigl]\,\,\,
\mbox{ or }
\,\,\,\Bigl[\text{$V^{p}(i) = \infty$ a.s. for all $i\in \llbracket p+1,S \rrbracket$}\Bigl]. 
\end{equation*}

Let for all $n\in\N$, $S_n=\sum_{i=1}^S \mathbf V_n^p(i).$ We have for all $n$, 
\begin{multline*}
S_{n+1}\circ\theta - S_n =\left[\left(\sigma \vee \mathbf V_n^p(1)\right)\wedge \mathbf V_n^p(p+1) - \tau \right]^+-\mathbf V_n^p(1)\\
                \shoveright{+\sum_{j=2;\,j\ne p+1}^S\left(\left[\mathbf V_n^p(j) - \tau\right]^+-\mathbf V_n^p(j)\right)
                            +\left[\mathbf V_n^p(p+1)+\sigma\ind_{\{\mathbf V_n^p(1)>0\}} - \tau\right]^+-\mathbf V_n^p(p+1)}\\
         \shoveleft{=\left[\left(\sigma \vee \mathbf V_n^p(1)\right)\wedge \mathbf V_n^p(p+1) - \tau \right]^+
                     +\sum_{j=2}^S\left[\mathbf V_n^p(j) - \tau\right]^+-\sum_{j=1}^S \mathbf V_n^p(j)}\\
         -\sum_{j=p+2}^{S} \mathbf V_n^p(j)\wedge \tau -\mathbf V_n^p(p+1)\wedge\left(\tau-\sigma\ind_{\{\mathbf V_n^p(1)>0\}}\right) .
\end{multline*}
Suppose that we have $V^{p}(i) = \infty$ a.s. for all $i \in \llbracket p+1,\, S\rrbracket $. 
Then, almost surely from a certain rank we have that 
\[S_{n+1}\circ\,\theta - S_n 
=\left[\sigma \vee \mathbf V_n^p(1) - \tau \right]^++\sum_{j=2}^S\left[\mathbf V_n^p(j) - \tau\right]^+-\sum_{j=1}^S \mathbf V_n^p(j)
         -(S-p)\tau +\sigma\ind_{\{\mathbf V_n^p(1)>0\}},\]
so as the sequence $\{S_n\}$ is a.s. non-decreasing, by monotone convergence we obtain that 
\begin{equation}\label{eq:peoplelike1}
0 \le \esp{\left[\sigma \vee V^p(1) - \tau \right]^++\sum_{j=2}^S\left[V^p(j) - \tau\right]^+-\sum_{j=1}^S V^p(j)
         -(S-p)\tau +\sigma\ind_{\{V^p(1)>0\}}}.\end{equation}
         
On another hand, as $V^p \circ\theta=\Phi^p\left(V^p\right)$ a.s., $V^{p}(p+1)=\infty$ a.s. entails that 
\[\left\{\begin{array}{ll}
V^p(j)\circ\theta &=\biggl[\Bigl( V^p(j)\vee \sigma\Bigl) \wedge\, V^p(j+1)-\tau\biggl]^+,\,\,j\in \llbracket 1,p-1 \rrbracket;\\
V^p(p) &=\biggl[\Bigl( V^p(p)\vee \sigma\Bigl) -\tau\biggl]^+,
         \end{array}\right.\]
or in other words 
\[V^p\circ\theta[p] = \Psi^p\left(V^p[p]\right)\mbox{ a.s.}.\]
Hence from Corollary \ref{cor:stabPsi}, we have  
$V^p[p]=Y\mbox{ a.s. for some solution $Y$ of (\ref{eq:recurstatPsi})}$           
which, together with (\ref{eq:peoplelike1}), leads to the conclusion that   
\begin{multline}
\label{eq:peoplelike2}
\biggl[V^{p}(j)=\infty \mbox{ a.s. for all }j\in\llbracket p+1,S \rrbracket \biggl]\\
\shoveleft{\Longrightarrow  \Biggl[\mbox{For some solution $Y$ to (\ref{eq:recurstatPsi}),}}\\
\left.\esp{\sigma\ind_{\{Y(1)>0\}}} 
             \ge (S-p)\esp{\tau}+\esp{\sum_{j=1}^S Y(j)}
                - \esp{\Bigl[\sigma\vee Y(1)-\tau\Bigl]^++\sum_{j=2}^S \left[ Y(j)-\tau\right]^+}\right].
\end{multline}            
\bigskip
Finally, just observe that for any such $Y$,  
\[\sum_{i=1}^p Y(i) \circ\theta= \sum_{i=1}^p \Psi^p\left(Y\right)(i) 
                                 = \Bigl[\sigma\vee Y(1)-\tau\Bigl]^++\sum_{j=2}^S \left[ Y(j)-\tau\right]^+,\]
which implies by the $\theta$-invariance of $\mathbb P$, that 
\[\esp{\sum_{i=1}^p Y(i)} =\esp{\sum_{i=1}^p Y(i) \circ\theta}=\esp{\Bigl[\sigma\vee Y(1)-\tau\Bigl]^++\sum_{j=2}^S \left[ Y(j)-\tau\right]^+}.\]
Plugging this into the right-hand side of (\ref{eq:peoplelike2}), concludes the proof.  
\end{proof}
Let us observe furthermore, 
\begin{proposition}
\label{prop:minsol}
Denote $Y^p$ and $V^p$, the $\prec$-minimal solutions to (\ref{eq:recurstatPsi}) and 
(\ref{eq:recurstatPhi}) respectively, obtained by Loynes' Scheme. Then we have 
\[V^p[p] \prec Y^p\mbox{ a.s..}\]
\end{proposition}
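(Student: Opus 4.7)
The plan is to run the two Loynes schemes in parallel and compare them coordinate by coordinate, using Lemma \ref{lemma:H}(iii) as the monotonicity bridge between $\Phi^p$ and $\Psi^p$. Recall that $V^p$ and $Y^p$ arise as the almost sure, coordinate-wise non-decreasing limits of the Loynes sequences $\{\mathbf V_n^p\}$ and $\{\mathbf Y_n^p\}$, started at $\mathbf 0_S$ and $\mathbf 0_p$ respectively.

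First I would show by induction on $n$ that $\mathbf V_n^p[p]\prec \mathbf Y_n^p$ almost surely. The base case $n=0$ is immediate, as both sides equal $\mathbf 0_p$. For the induction step, assuming the inequality at rank $n$, I apply $\theta^{-1}$ (a $\P$-preserving bijection, so a.s. inequalities are preserved) and use that restriction to the first $p$ coordinates commutes with composition by $\theta^{-1}$ to get $(\mathbf V_n^p\circ\theta^{-1})[p]\prec \mathbf Y_n^p\circ\theta^{-1}$ a.s. Then Lemma \ref{lemma:H}(iii), applied sample-wise with $\sigma,\tau$ replaced by $\sigma\circ\theta^{-1},\tau\circ\theta^{-1}$ (which is harmless since the statement of that lemma is uniform in the values of $\sigma$ and $\tau$), yields
\[\Phi^p\circ\theta^{-1}\left(\mathbf V_n^p\circ\theta^{-1}\right)[p]\prec \Psi^p\circ\theta^{-1}\left(\mathbf Y_n^p\circ\theta^{-1}\right)\,\mbox{ a.s.,}\]
which is precisely $\mathbf V_{n+1}^p[p]\prec \mathbf Y_{n+1}^p$.

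Passing to the a.s. limit in $n$, the inequality transfers to the limits (convergence being coordinate-wise, by the general Loynes scheme), and we obtain the desired $V^p[p]\prec Y^p$ a.s. I do not anticipate any real obstacle here: the entire monotonicity content has already been packed into Lemma \ref{lemma:H}(iii), and the role of the present proposition is essentially to couple the two minimal Loynes constructions and to check that the first-$p$-coordinate restriction commutes with the dynamics in the required sense. The only mild care needed is in applying Lemma \ref{lemma:H}(iii) to the shifted maps $\Phi^p\circ\theta^{-1}$ and $\Psi^p\circ\theta^{-1}$, which is legitimate because the argument in that lemma is pointwise in $\sigma$ and $\tau$.
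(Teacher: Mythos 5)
Your proof is correct and follows essentially the same route as the paper: induction on the Loynes iterates, with the coordinate-wise comparison passed through Lemma \ref{lemma:H}(iii) applied under the shift $\theta^{-1}$, and then an almost sure limit. The only cosmetic difference is that you make explicit the sample-wise/shift-compatibility remark, which the paper leaves implicit.
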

\begin{proof}
We argue by induction. Denote $\{\mathbf Y_n^p\}$, the $\overline{\left(\R_+\right)^p}$- valued Loynes' sequence for the mapping $\Psi^p$ and again, 
$\{\mathbf V_n^p\}$, the $\overline{\left(\R_+\right)^S}$- valued Loynes' sequence for $\Phi^p$. 
 We have $\mathbf 0_S[p] \prec \mathbf 0_p$, and then $\mathbf V^p_n[p] \prec \mathbf Y^p_n$ a.s. for some $n$ entails 
by $\theta$-invariance that 
 $\mathbf V^p_n\circ\theta^{-1}[p] \prec \mathbf V^p_n\circ\theta^{-1}[p]$ a.s. which, in view of assertion (iii) of Lemma \ref{lemma:H}, 
 implies in turn that 
\[\mathbf V^p_{n+1}[p]=\Phi^p\circ\theta^{-1}\left(\mathbf V^p_n\circ\theta^{-1}\right)[p] \prec \Psi^p\circ\theta^{-1}\left(\mathbf Y^p_n\circ\theta^{-1}\right)=\mathbf Y^p_{n+1}\mbox{ a.s..}\] 
Thus we have proven that 
\[\mathbf V^p_n[p] \prec \mathbf Y^p_n \mbox{ a.s. for all }n\in\N,\]
which concludes the proof taking the almost sure coordinatewise limit. 
\end{proof}

\section{Proofs of the main results}
\label{sec:proofs}

\subsection{J$_{p+1}$SW systems of $S$ servers}
\label{subsec:proofGp}
We are now in position to prove Theorem \ref{thm:stabGp}. 
Let us first make the following simple observation, 
\begin{lemma}
\label{lemma:H2}
For any $p\in \llbracket 1,S-1 \rrbracket$, for all $u,v \in \overline {(\R_+)^S}$ such that $u \prec v$, we have 
$$G^p(u) \prec \Phi^{p}(v) \mbox{ a.s..}$$
\end{lemma}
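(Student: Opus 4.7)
The plan is to decouple the target inequality into two pointwise steps: first show that $G^p(u) \prec \Phi^p(u)$ for every $u \in \overline{(\R_+)^S}$, then invoke the $\prec$-monotonicity of $\Phi^p$ proved in Lemma \ref{lemma:H}(ii) to deduce $\Phi^p(u) \prec \Phi^p(v)$, and chain the two.

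For the first step, I would split according to the value of $\ind_{\{u(1)>0\}}$. On the event $\{u(1)=0\}$, both maps specialize to the same object: $G^p(u) = G(u) = \overline{[u+\sigma\mathbf e_1-\tau\mathbf 1]^+}$ by definition, and in $\Phi^p(u)$ every term of the form $\sigma + u(p+1)\ind_{\{u(1)>0\}}$ collapses to $\sigma$. The resulting coordinate-wise expression $[(u(j)\vee\sigma)\wedge u(j+1)-\tau]^+$ (resp.\ $[(u(S)\vee\sigma)-\tau]^+$) is precisely the Kiefer--Wolfowitz sorting identity for $\overline{[u+\sigma\mathbf e_1]}$ (cf.\ \cite{KW55,Neu83}), so $G^p(u)=\Phi^p(u)$ on this event.

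On the event $\{u(1)>0\}$, I would compute the sorted version of $[u+\sigma\mathbf e_{p+1}-\tau\mathbf 1]^+$ coordinate by coordinate. For $j\le p$ the value $u(j)$ is not altered by the perturbation and is smaller than $u(p+1)+\sigma$, so it stays in position $j$ after sorting, giving $G^p(u)(j)=[u(j)-\tau]^+$. Comparing with $\Phi^p(u)(j)=[(u(j)\vee\sigma)\wedge u(j+1)-\tau]^+$ and using $u(j)\le u(j)\vee\sigma$ together with $u(j)\le u(j+1)$ shows the desired inequality. For $j \in \llbracket p+1,S\rrbracket$, sorting $([u(p+1)+\sigma-\tau]^+, [u(p+2)-\tau]^+,\ldots,[u(S)-\tau]^+)$ amounts to applying Kiefer--Wolfowitz to the sub-vector $(u(p+1),\ldots,u(S))$ with the service $\sigma$ added at its leading coordinate; the resulting formula is exactly $\Phi^p(u)(j)$ with the $\sigma+u(p+1)\ind_{\{u(1)>0\}}$ term, so equality holds coordinate-wise.

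Combining the two events yields $G^p(u)\prec\Phi^p(u)$ almost surely, and then Lemma \ref{lemma:H}(ii) gives $\Phi^p(u)\prec\Phi^p(v)$ from $u\prec v$, which closes the argument. The only (mild) obstacle is book-keeping for the Kiefer--Wolfowitz sorting identity on the sub-vector $(u(p+1),\ldots,u(S))$ and making sure the indicator $\ind_{\{u(1)>0\}}$ lines up with the case split; no further machinery beyond the elementary manipulations of $\vee$ and $\wedge$ used throughout Section~\ref{sec:aux} is required.
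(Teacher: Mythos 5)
Your proof is correct and relies on exactly the same three facts as the paper's: for $j\le p$ the only difference between $G^p(u)(j)$ and $\Phi^p(u)(j)$ is the indicator $\ind_{\{u(1)=0\}}$ multiplying $\sigma$, for $j\ge p+1$ one has the identity $G^p(u)(j)=\Phi^p(u)(j)$, and $\Phi^p$ is $\prec$-nondecreasing. The only difference is packaging: the paper folds the $u\prec v$ comparison directly into the coordinates $j\le p$, while you first isolate the cleaner intermediate inequality $G^p(u)\prec\Phi^p(u)$ and then apply Lemma~\ref{lemma:H}(ii) uniformly in $j$; this is a mild, arguably more modular rearrangement of the same argument.
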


\begin{proof}
Let $u \prec v$. It suffices to observe that for all $j \le p$,
\begin{equation}
\label{eq:compare0}
G^{p}(u)(j) = \left[\left( u(j)\vee \sigma\ind_{\{u(1)=0\}}\right) \wedge u(j+1) -\tau \right]^+
                    \le \left[\left( v(j)\vee \sigma\right) \wedge v(j+1) -\tau \right]^+
             = \Phi^{p}(v)(j),
             \end{equation}
whereas for all $j \in \llbracket p+1,S \rrbracket$, assertion (ii) of Lemma \ref{lemma:H} entails that 
\[G^p(u)(j) = \Phi^{p}(u)(j) \le \Phi^{p}(v)(j).\]
\end{proof}
We therefore have the following result, 
\begin{proposition}
\label{prop:stabGp}
Suppose that (\ref{eq:hypoPhi}) holds true and denote $V^p$ the minimal solution of (\ref{eq:recurstatPhi}). 
If furthermore it holds true that 
\begin{equation}
\pr{\maG}:=\pr{\left\{V^{p}(1)=0\right\}\,\bigcap\,\left(\bigcap_{\ell=2}^{S}\left\{V^{p}(\ell)\le \sum_{i=0}^{\ell-1}\tau\circ\theta^i\right\}\right)}>0,\label{eq:hypoGp1bis}
\end{equation}
then the equation (\ref{eq:recurstatGp}) admits at least one proper solution $W^p$.
\end{proposition}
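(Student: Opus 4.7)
The plan is to combine the stochastic comparison established in Lemma \ref{lemma:H2} with the Borovkov--Foss theory of renovating events, harnessing the monotonic SRS $V^p$ produced by Proposition \ref{pro:stabH} both as a stochastic upper bound for the non-monotonic $G^p$-SRS and as the engine for manufacturing renovating events.

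First, I invoke Proposition \ref{pro:stabH} under hypothesis \refeq{eq:hypoPhi} to secure the $\prec$-minimal solution $V^p$ of \refeq{eq:recurstatPhi}. I then define $\{W^p_n\}_{n \ge 0}$ as the SRS driven by $G^p$ with initial condition $W^p_0 = \mathbf 0_S$, so that $W^p_{n+1} = G^p \circ \theta^n(W^p_n)$. Starting from the trivial bound $\mathbf 0_S \prec V^p$ and iterating Lemma \ref{lemma:H2} in its shifted version ($G^p \circ \theta^n(u) \prec \Phi^p \circ \theta^n(v)$ whenever $u \prec v$), an immediate induction yields the stochastic domination $W^p_n \prec V^p \circ \theta^n$ $\mathbb P$-a.s.\ for every $n \ge 0$.

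The heart of the argument is then to exhibit the stationary family $\{\theta^{-n}\maG\}_{n\in\N}$ as a sequence of renovating events of length $S$ for the SRS $\{W^p_n\}$. Concretely, I aim to show that on $\maG \circ \theta^n$, the random vector $W^p_{n+S}$ coincides almost surely with a measurable function of the local data $(\sigma \circ \theta^{n+k}, \tau \circ \theta^{n+k})_{0 \le k < S}$ alone, so that its value does not depend on $W^p_n$ nor on anything preceding time $n$. The rationale is that on $\maG \circ \theta^n$ the domination forces the bounds $W^p_n(1) = 0$ and $W^p_n(\ell) \le \sum_{i=0}^{\ell-1} \tau \circ \theta^{n+i}$ for $\ell = 2, \ldots, S$; in other words, the $\ell$-th smallest initial workload at time $n$ is small enough to be drained within $\ell$ subsequent inter-arrival intervals. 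An inductive tracking of the sorted profile through $S$ iterations of $G^p$---exploiting the fact that $W^p_n(1) = 0$ puts one in the free-server branch $G^p = G$, so that the incoming customer is enqueued on the idle server without adding work to positions $2, \ldots, S$ directly---should establish that after $S$ steps every original workload coordinate has been depleted, leaving $W^p_{n+S}$ entirely determined by the $S$ freshly arrived services $\sigma \circ \theta^{n+k}$ and the inter-arrivals $\tau \circ \theta^{n+k}$, $0 \le k < S$.

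Once this renovation property is in hand, the hypothesis $\pr{\maG} > 0$ from \refeq{eq:hypoGp1bis}, combined with the Borovkov--Foss theorem on renovating events (\cite{Bor84,Foss92} and Remark 2.5.3 of \cite{BacBre02}), yields that $W^p_n \circ \theta^{-n}$ converges $\mathbb P$-a.s.\ to a finite, $\overline{(\R_+)^S}$-valued random variable $W^p$, which solves \refeq{eq:recurstatGp} by passage to the limit. The principal technical obstacle is precisely the rigorous verification of the renovation: because $G^p$ is not $\prec$-monotone and the sort permutation reshuffles coordinates at each step, the inherited bounds on $W^p_n$ do not propagate automatically to $W^p_{n+1}, \ldots, W^p_{n+S-1}$. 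A careful coordinate-by-coordinate bookkeeping---leveraging the fact that the bounds in the definition of $\maG$ were precisely tailored to force complete depletion within $S$ arrivals while the system remains in the free-server branch---will be required to bring the argument to its conclusion.
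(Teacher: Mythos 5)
Your proposal is essentially the paper's proof: both establish the domination $W^p_n \prec V^p\circ\theta^n$ by iterating Lemma~\ref{lemma:H2} starting from a state $\prec V^p$, then identify $\{\theta^{-n}\maG\}$ as a stationary sequence of renovating events and invoke the Borovkov--Foss theorem. Two inessential differences: the paper works with the whole family $\mathscr J=\{X:X\prec V^p\}$ of initial states rather than only $\mathbf 0_S$ (you could, of course, just observe that $\mathbf 0_S\in\mathscr J$), and the paper obtains renovation length $S-1$ rather than your $S$.

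The one genuine gap in your write-up is the step you yourself flag as "careful bookkeeping," and it is not a formality: it is the heart of the argument. The domination gives you $W^p_n(1)=0$ and $W^p_n(\ell)\le\sum_{i=0}^{\ell-1}\tau\circ\theta^{n+i}$ only at time $n$, so you are guaranteed to be in the free-server branch $G^p=G$ only at the first of your $S$ iterations. What you must show, and do not, is that the free-server branch is taken at \emph{every} subsequent step $n+1,\dots,n+S-2$; if at some intermediate step all servers were busy, the $(p+1)$-st-workload branch would fire and the renovation would collapse. The mechanism that rescues this is the inductive depletion of the original coordinates: since each $W^p_n(\ell)$ is bounded by the accumulated inter-arrival times, the server that initially carried the $\ell$-th smallest workload becomes idle before customer $C_{n+\ell-1}$ arrives, so at each intermediate step the minimum of the profile is zero and the free-server branch keeps firing. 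Once this is nailed down, the sorted profile at time $n+S-1$ (not $n+S$) is a function of $(\sigma\circ\theta^{n+i},\tau\circ\theta^{n+i})_{0\le i\le S-2}$ alone, which is exactly the renovation property you need. I recommend you carry out that inductive verification explicitly, tracking which server each $C_{n+k}$ joins and which original coordinate vanishes at each step; it is short but is precisely where the non-monotonicity of $G^p$ would otherwise derail the argument.
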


\begin{proof}
From Proposition \ref{pro:stabH}, a solution to (\ref{eq:recurstatPhi}) exists, and we let again $V^{p}$ be the minimal one. 
The sequence $\left\{V^{p}\circ\theta^n\right\}$ thus corresponds to the stochastic recursion 
of initial value $V^{p}$, driven by the random map $\Phi^p$. 
Consider the following family of $\overline{\left(\R_+\right)^S}$-valued random variables:
$$\mathscr J=\left\{X:\,X\prec V^{p}\mbox{ a.s.}\right\}.$$ 
Fix a r.v. $X \in \mathscr J$ and denote $\left\{W_{X,n}^p\right\}$, the service profile sequence of the J$_{p+1}$SW system, 
starting from an initial profile $X$. It follows from Lemma \ref{lemma:H2} using a simple induction, that 
$W_{X,n}^p \prec V^p\circ\theta^n \mbox{ a.s. for all }n \in \N.$
Therefore, for almost every sample on the event $\theta^{-n}\maG$ (where $\maG$ is defined by (\ref{eq:hypoGp1bis})) we have that 
\[\left\{\begin{array}{ll}
W_{X,n}^p(1)&=0;\\
W_{X,n}^p(\ell)&\le \displaystyle\sum_{i=0}^{\ell-1}\tau\circ\theta^{n+i}  ;\,\ell \in \llbracket 2,S \rrbracket.
\end{array}\right.\]
It is then easy to check that on $\theta^{-n}\maG$, 
\begin{itemize} 
\item in the vector $W_{X,n+1}^p$, the coordinate corresponding to $W_{X,n}^p(2)$ vanishes and the one corresponding to 
$W_{X,n}^p(1)$ becomes\\ $\left[\sigma\circ\theta^n-\tau\circ\theta^n\right]^+;$ 
\item in $W_{X,n+2}^p$, the coordinate corresponding to $W_{X,n}^p(3)$ vanishes, the one corresponding to 
$W_{X,n}^p(2)$ becomes $\left[\sigma\circ\theta^{n+1}-\tau\circ\theta^{n+1}\right]^+$, and the one corresponding to $W_{X,n}^p(1)$ equals\\ $\left[\sigma\circ\theta^n-\tau\circ\theta^n-\tau\circ\theta^{n+1}\right]^+$;
\item[$\vdots$]
\item[$\vdots$]
\medskip
\item $W_{X,n+S-1}^p$ has at least $1$ null coordinate, and its $S-1$ last coordinates all are functions 
of $\left\{\left(\sigma\circ\theta^{n+i},\sigma\circ\theta^{n+i}\right),\,i\in \llbracket 0,S-2 \rrbracket\right\}$. 
\end{itemize}
Consequently, $\left\{\theta^{-n}\maG\right\}$ is a stationary sequence of renovating events of length $S-1$ for any sequence 
$\left\{W_{X,n}^p\right\}$ with $X \in \mathscr J$ (see \cite{Bor84}). So (\ref{eq:hypoGp1bis}) entails the existence 
of a solution to (\ref{eq:recurstatGp}), applying Theorem 4 in \cite{Foss92} (or equivalently, Theorem 1 p.260 in \cite{Bor84} and Corollary 2.5.1 in \cite{BacBre02}). 
\end{proof}

We conclude with the proof of Theorem \ref{thm:stabGp}. 
\begin{proof}[Proof of Theorem \ref{thm:stabGp}]
From the GI/GI assumption, $\sigma$ is independent to 
$\left\{\left(\sigma,\tau\right)\circ\theta^{-i},\,i\in \N^*\right\}$, so we have that 
\[\esp{\sigma}\pr{Z_p>0}= \esp{\sigma\ind_{\{Z_p>0\}}} \ge \esp{\sigma\ind_{\{Y^p(1)>0\}}},\]
where we used (\ref{eq:compareYZ}) in the second inequality. Therefore, 
(\ref{eq:hypoGp1}) entails (\ref{eq:hypoPhi}). On the other hand, (\ref{eq:Y1nul}) holds true and, 
from Proposition \ref{prop:minsol}, implies that $\pr{V^p(1)>0}$. 
This together with the independence assumptions and the unboundedness of $\tau$, clearly entails (\ref{eq:hypoGp1bis}).  
So Proposition \ref{prop:stabGp} applies, which concludes the proof. 
\end{proof}

\subsection{Loss systems of $p$ servers}
\label{subsec:proofLoss}
We now turn to the proof of Theorem \ref{thm:stabLoss}. 
First observe the following pathwise bounds, 
\begin{lemma}
\label{lemma:compareLoss}
For any two elements $u$ and $v$ of $\overline{\left(\R_+\right)^p}$ we have for any $i \in \llbracket 1,p \rrbracket$,  
\[
\biggl[\mbox{For all }j \in \llbracket i,p \rrbracket,\,u(j) \le v(j)\biggl]\,
\Longrightarrow \biggl[\mbox{For all }j \in \llbracket i,p \rrbracket,\,H^p(u)(j) \prec \Psi^{p}(v)(j)\mbox{ a.s.}\biggl].
\]
\end{lemma}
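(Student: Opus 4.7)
The plan is to verify the claimed inequality coordinate by coordinate for $j$ running from $p$ down to $i$, using the explicit expressions (\ref{eq:coordSRSLoss}) for $H^p$ together with the formula for $\Psi^p$ given in Section \ref{sec:aux}, and relying only on the elementary monotonicity of the operations $(a,b)\mapsto a\vee b$, $(a,b)\mapsto a\wedge b$ and $x\mapsto [x-\tau]^+$ in each of their arguments.

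For the top coordinate $j=p$, I would compare
\[H^p(u)(p)=\left[u(p)\vee\sigma\ind_{\{u(1)=0\}}-\tau\right]^+\quad\text{and}\quad\Psi^p(v)(p)=\left[v(p)\vee\sigma-\tau\right]^+.\]
Since $u(p)\le v(p)$ by hypothesis and $\sigma\ind_{\{u(1)=0\}}\le\sigma$ trivially, the bound $H^p(u)(p)\le\Psi^p(v)(p)$ is immediate from the stated monotonicities.

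For each remaining $j\in\llbracket i,p-1\rrbracket$, the formula
\[H^p(u)(j)=\left[\bigl(u(j)\vee\sigma\ind_{\{u(1)=0\}}\bigr)\wedge u(j+1)-\tau\right]^+\]
involves only the coordinates $u(j)$ and $u(j+1)$; both indices lie in $\llbracket i,p\rrbracket$ (as $j+1\le p$ whenever $j\le p-1$), so the hypothesis gives $u(j)\le v(j)$ and $u(j+1)\le v(j+1)$. Combining this with $\sigma\ind_{\{u(1)=0\}}\le\sigma$ and the joint monotonicity of $\vee$, $\wedge$ and $[\,\cdot-\tau]^+$ then yields
\[H^p(u)(j)\le\left[\bigl(v(j)\vee\sigma\bigr)\wedge v(j+1)-\tau\right]^+=\Psi^p(v)(j),\]
as required.

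There is no genuine obstacle: the statement is a purely coordinatewise monotonicity check. The only point deserving attention is that the hypothesis bears on the \emph{tail} $\llbracket i,p\rrbracket$ rather than on the whole vector, and one must verify that evaluating $H^p(u)(j)$ for $j\ge i$ never requires a coordinate of $u$ of index strictly below $i$ in an essential way; the indicator $\ind_{\{u(1)=0\}}$, which is the only such ``leakage'' of $u(1)$ into the formula, is harmlessly bounded above by $1$ and absorbed into the weaker indicator-free factor appearing in $\Psi^p(v)$.
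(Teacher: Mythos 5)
Your proof is correct and follows essentially the same approach as the paper: a coordinatewise comparison from the explicit formulas for $H^p$ and $\Psi^p$, using $\sigma\ind_{\{u(1)=0\}}\le\sigma$ and the hypothesis $u(j)\le v(j)$ on $\llbracket i,p\rrbracket$, combined with the monotonicity of $\vee$, $\wedge$ and $[\,\cdot-\tau]^+$. The paper factors the argument through an intermediate comparison with $\Psi^p(u)$ and then invokes the $\prec$-monotonicity of $\Psi^p$, while you do both steps at once; this is a cosmetic difference.
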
 

\begin{proof}
Fix $i \in \llbracket 1,p \rrbracket$ and suppose that $u(j) \le v(j)$ for all $j \in \llbracket i,p \rrbracket$. 
As $\Psi^p$ is $\prec$-non decreasing, we have 
\[
H^{p}(u)(p)  \le \Bigl[u(p)\vee \sigma - \tau \Bigl]^+=\Psi^p(u)(p)\le \Psi^p(v)(p)\mbox{ a.s.}
\]
and likewise, for $j \in \llbracket i,p-1 \rrbracket$,  
\[
H^{p}(u)(j)  \le \Bigl[\left(u(j)\vee \sigma\right)\wedge u(j+1) - \tau \Bigl]^+=\Psi^p(u)(j)\le \Psi^p(v)(j)\mbox{ a.s..}
\]
\end{proof}

We therefore have the following result, 
\begin{proposition}
\label{prop:stabLoss}
Let $Y^p$ be the minimal solution of (\ref{eq:recurstatPsi}). If we have 
\begin{equation}
\label{eq:hypoLoss}
\pr{\maH}:=\pr{\left\{Y^{p}(1)=0\right\}
\bigcap\,\left(\bigcap_{\ell=2}^{p}\left\{Y^{p}(\ell)\le \sum_{i=0}^{\ell-1}\tau\circ\theta^i\right\}\right)}>0,
\end{equation}
then there exists a unique solution $U^p$ to (\ref{eq:recurstatLoss}), that is such that 
\begin{equation}
\label{eq:compareYU}
U^p \prec Y^p\mbox{ a.s..} 
\end{equation}
\end{proposition}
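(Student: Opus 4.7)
The proof will adapt the strategy used for Proposition \ref{prop:stabGp}, with $\Psi^p$ and $Y^p$ playing the roles formerly assumed by $\Phi^p$ and $V^p$, and with Lemma \ref{lemma:compareLoss} replacing Lemma \ref{lemma:H2}. I first set $\mathscr I = \{X : X \prec Y^p \text{ a.s.}\}$. For every $X \in \mathscr I$, I denote $\{U_{X,n}^p\}$ the SRS driven by $H^p$ with $U_{X,0}^p = X$; an immediate induction from Lemma \ref{lemma:compareLoss} (applied with $i = 1$), combined with the recursion $Y^p \circ \theta = \Psi^p(Y^p)$, gives $U_{X,n}^p \prec Y^p \circ \theta^n$ a.s. for all $n \in \N$.

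The core of the argument is the identification of a stationary sequence of renovating events. On $\theta^{-n}\maH$, the previous bound yields $U_{X,n}^p(1) = 0$ and $U_{X,n}^p(\ell) \le \sum_{i=0}^{\ell-1} \tau \circ \theta^{n+i}$ for $\ell \in \llbracket 2, p \rrbracket$. I then trace the dynamics via the coordinate form (\ref{eq:coordSRSLoss}) of $H^p$: the idle server is loaded with $\sigma \circ \theta^n$ at step $n+1$; the coordinate initially equal to $U_{X,n}^p(2)$ vanishes at step $n+1$ (because $U_{X,n}^p(2) \le \tau \circ \theta^n$), that initially equal to $U_{X,n}^p(3)$ vanishes at step $n+2$, and so on, so that after $p - 1$ further arrivals the vector $U_{X,n+p-1}^p$ is a deterministic function of $\{(\sigma, \tau) \circ \theta^{n+i} : i \in \llbracket 0, p-2 \rrbracket\}$ alone, with no residual dependence on $X$. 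Hence $\{\theta^{-n}\maH\}_{n \in \N}$ is a stationary sequence of renovating events of length $p-1$ for every trajectory with $X \in \mathscr I$, and by (\ref{eq:hypoLoss}) it has positive probability. Applying the renovating events theorem (Theorem 4 in \cite{Foss92}, equivalently Theorem 1 p.260 in \cite{Bor84} together with Corollary 2.5.1 in \cite{BacBre02}) then delivers the existence of a solution $U^p$ to (\ref{eq:recurstatLoss}), obtained as the a.s. coupling limit of the SRS starting from any $X \in \mathscr I$, together with uniqueness of such a solution within $\mathscr I$; the bound (\ref{eq:compareYU}) is automatic because $U^p$ is the almost-sure limit of r.v.'s dominated by $Y^p \circ \theta^n$.

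The main obstacle will be upgrading uniqueness from ``within $\mathscr I$'' to unqualified uniqueness. For any solution $\tilde U$ to (\ref{eq:recurstatLoss}), the event $\{\tilde U \prec Y^p\}$ is $\theta$-contracting (by Lemma \ref{lemma:compareLoss} combined with the two functional equations $\tilde U \circ \theta = H^p(\tilde U)$ and $Y^p \circ \theta = \Psi^p(Y^p)$), and hence of probability $0$ or $1$ by ergodicity. To establish the latter, I would mimic the argument used in the proof of Corollary \ref{cor:stabPsi} and exhibit a non-negligible event on which $\tilde U \prec Y^p$ is forced by the pathwise dynamics --- a natural candidate being an event on which a window of sufficiently long inter-arrival times preceding time $0$ drains any loss-system profile, forcing $\tilde U = \mathbf 0_p$ there. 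Once this inclusion is secured, $\tilde U \in \mathscr I$ and the uniqueness obtained above yields $\tilde U = U^p$ a.s.
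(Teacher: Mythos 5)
Your treatment of the existence step is sound and matches the paper: you bound the coupled trajectory $U^p_{X,n}$ from above by $Y^p\circ\theta^n$ via Lemma \ref{lemma:compareLoss}, verify on $\theta^{-n}\maH$ that the initial state is forgotten after $p-1$ arrivals, and invoke the renovating-events theorem. You also correctly identify the key obstacle (upgrading from uniqueness among dominated solutions to unqualified uniqueness) and correctly observe that, by Lemma \ref{lemma:compareLoss}, the event $\{\tilde U\prec Y^p\}$ is $\theta$-contracting, hence of probability $0$ or $1$.

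However, the way you propose to close the argument --- exhibiting a fixed event, determined only by the input, on which ``a window of sufficiently long inter-arrival times preceding time $0$ drains any loss-system profile, forcing $\tilde U=\mathbf 0_p$'' --- does not go through, and this is where the genuine content of the proof lies. The difficulty is that you have no a priori magnitude bound on an arbitrary solution $\tilde U$: the loss-system profile can carry contributions from arbitrarily far in the past, so no fixed window length $M$ of large $\tau\circ\theta^{-i}$'s can be guaranteed to empty it without already knowing $\tilde U$ is controlled. The paper does something structurally different. It first proves that the event $\maC:=\{\tilde U(p)\le\sigma\ind_{\{\tilde U(1)=0\}}\}$, which \emph{does} depend on $\tilde U$, has positive probability, by a contradiction with the Ergodic Lemma: if $\maC$ were negligible then $\tilde U(p)>0$ a.s., hence $\tilde U(p)\circ\theta>0$ a.s., hence $\tilde U(p)\circ\theta=\tilde U(p)-\tau$, which forces $\esp{\tilde U(p)\circ\theta-\tilde U(p)}<0$. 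It then does \emph{not} show $\tilde U\prec Y^p$ directly on a single non-negligible set; instead it introduces the decreasing chain of $\theta$-contracting events $\maB_i=\{\tilde U(j)\le Y^p(j)\text{ for all }j\in\llbracket i,p\rrbracket\}$, shows $\maC\subset\theta^{-1}\maB_p$ so $\maB_p$ is almost sure, and then descends by induction: on $\maC\cap\maB_i$ one derives $\theta^{-1}\maB_{i-1}$, which shows each $\maB_{i-1}$ is non-negligible, hence (being $\theta$-contracting) almost sure. This coordinate-by-coordinate induction, anchored at the top index $p$ via $\maC$, is precisely the mechanism your outline is missing, and it cannot be replaced by the drain-via-long-idle-periods heuristic you suggest.
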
 

\begin{proof}
Denote for any random variable $X$, $\left\{U^p_{X,n}\right\}$ the service profile 
sequence of the loss system with $p$ servers, when setting the initial value as $U^p_{X,0}=X$ a.s.. 
Define the family of $\overline{(\R_+)^p}$-valued random variables 
\[\mathscr K=\left\{X:\,X \prec Y^p\mbox{ a.s.}\right\}.\] 

Fix a r.v. $X\in \mathscr K$. Then Lemma \ref{lemma:compareLoss} implies with an immediate induction that 
for any $n\in\N$, $U^p_{X,n} \prec Y^p\circ\theta^n\,\mbox{ a.s..}$ 
Thus, for almost every sample on the event $\theta^{-n}\maH$ we have that 
\[\left\{\begin{array}{ll}
U^p_{X,n}(1)&=0;\\
U^p_{X,n}(\ell)&\le \displaystyle\sum_{i=0}^{\ell-1}\tau\circ\theta^{n+i}  ;\,\ell \in \llbracket 2,p \rrbracket.
\end{array}\right.\]
We can then show exactly as in the proof of Proposition \ref{prop:stabGp} that $\{\theta^{-n}\maH\}$ (where the event $\maH$ is defined 
by (\ref{eq:hypoLoss})) is a stationary sequence of renovating events of length $p-1$ for any such sequence $\left\{U^p_{X,n}\right\}.$  
In view of (\ref{eq:hypoLoss}), Theorem 4 of \cite{Foss92} guarantees once again the existence of a solution 
$U$ to (\ref{eq:recurstatLoss}). 

\bigskip

We now prove that 
\begin{equation}
\label{eq:compareYU0}
U \prec Y^p \mbox{ a.s. for any solutions $U$ to (\ref{eq:recurstatLoss}).}
\end{equation}
Fix a solution $U$. Let us first prove that 
\begin{equation}
\label{eq:compareYU1}
\pr{\maC}:=\pr{U(p) \le \sigma\ind_{\{U(1)=0\}}}>0.
\end{equation} 
Suppose that $U(p) > \sigma\ind_{\{U(1)=0\}}$ a.s.. In particular, we have $U(p) > 0$ a.s. and in turn by $\theta$-invariance, 
$U(p)\circ\theta > 0$ a.s.. From (\ref{eq:coordSRSLoss}), we thus have 
\[U(p)\circ\theta = \left[U(p) \vee \left(\sigma\ind_{\{U(1) =0\}}\right)-\tau\right]^+ = U(p)-\tau\]
and hence $\esp{U(p)\circ\theta-U(p)}<0,$ a contradiction to the Ergodic lemma. Hence (\ref{eq:compareYU1}). 

\medskip

On another hand, let us define the events 
\[\maB_i:=\Bigl\{U(j) \le Y^p(j)\,\mbox{ for any }j\in \llbracket i,p \rrbracket\Bigl\};\,\,i\in \llbracket 1,p \rrbracket.\]
From Lemma \ref{lemma:compareLoss} we have for any $i$, almost surely on $\maB_i$, 
\[U(j)\circ\theta=H^p(U)(j) \le \Psi^p(Y^p)(j)=Y^p(j)\circ\theta\,\mbox{ for any }j\in \llbracket i,p \rrbracket,\] 
in other words the events $\maB_i$, $i\in \llbracket 1,p \rrbracket$, all are $\theta$-contracting.  

\medskip 

Now, on $\maC$, we have that 
\[U(p)\circ\theta = \left[\sigma\ind_{\{u(1) =0\}}-\tau\right]^+ \le \left[\sigma-\tau\right]^+\le \left[\sigma\vee Y^p(p)-\tau\right]^+ 
=Y^p(p)\circ\theta.\]
This means that $\maC \subset \theta^{-1}\maB_p$ and therefore \[\pr{\maB_p}=\pr{\theta^{-1}\maB_p}\ge \pr{\maC}>0.\] 
Consequently, the $\theta$-contracting event $\maB_p$ is almost sure. 

\medskip

If $p\ge 2$, suppose that $\maB_{i}$ is almost sure for some $i \in \llbracket 2,p \rrbracket$. 
Then, for almost every sample on $\maC \cap \maB_i$ we have that 
\begin{align*}
U(i-1)\circ\theta &= \left[\left(U(i-1) \vee \left(\sigma\ind_{\{U(1) =0\}}\right)\right)\wedge U(i)-\tau\right]^+\\
                  &\le \left[\left(\sigma\ind_{\{U(1) =0\}}\right)\wedge U(i)-\tau\right]^+\\
                   &\le \left[\left(\sigma\ind_{\{U(1) =0\}}\right)\wedge Y^p(i)-\tau\right]^+
                  \le \left[\left(Y^p(i-1)\vee\sigma\ind_{\{U(1) =0\}}\right)\wedge Y^p(i)-\tau\right]^+
                  =Y^p(i-1)\circ\theta.
\end{align*}
Therefore, $\left(\maC \cap \maB_i\right) \subset \theta^{-1}\maB_{i-1}$, which entails that 
\[\pr{\maB_{i-1}}=\pr{\theta^{-1}\maB_{i-1}} \ge \pr{\maC \cap \maB_i}>0,\] 
so $\maB_{i-1}$ is almost sure. We conclude by induction on $i$ that the events 
$\maB_{i}$, $i \in \llbracket 2,p \rrbracket$ are all almost sure, which concludes the proof of (\ref{eq:compareYU0}).

\bigskip 
An immediate consequence of (\ref{eq:compareYU0}) is the uniqueness of the solution $U^p$ to (\ref{eq:recurstatLoss}): 
indeed, (\ref{eq:compareYU0}) readily entails that $\{\theta^{-n}\maH\}$ is a stationary sequence of renovating events 
of length $p-1$ for any sequence $\left\{U\circ\theta^n\right\},$ with $U$ a solution to (\ref{eq:recurstatLoss}). 
Remark 2.5.3 of \cite{BacBre02} then shows the uniqueness of the solution.   
\end{proof}

\begin{proof}[Proof of Theorem \ref{thm:stabLoss}]
From (\ref{eq:compareYZ}) and (\ref{eq:solstatGamma}), we have that $Y^p(1)<Z_p$ a.s. and thus (\ref{eq:hypoGp2}) 
entails that 
\begin{equation}
\label{eq:Y1nul}
\pr{Y^p(1)=0}>0.
\end{equation}  
As in the proof of Theorem \ref{thm:stabGp}, this together with the GI/GI assumption and the unboundedness of $\tau$, entails 
(\ref{eq:hypoLoss}). We therefore can apply Proposition \ref{prop:stabLoss}. 
\end{proof}

\providecommand{\bysame}{\leavevmode\hbox to3em{\hrulefill}\thinspace}
\providecommand{\MR}{\relax\ifhmode\unskip\space\fi MR }
\providecommand{\MRhref}[2]{%
  \href{http://www.ams.org/mathscinet-getitem?mr=#1}{#2}
}
\providecommand{\href}[2]{#2}

\end{document}